\setlist[enumerate]{label={\textnormal{(\roman*)}}}
\newtheorem{theorem}{Theorem}[section]
\newtheorem{lemma}[theorem]{Lemma}
\newtheorem{corollary}[theorem]{Corollary}
\newtheorem{observation}[theorem]{Observation}
\theoremstyle{definition}
\newtheorem{algorithm}[theorem]{Algorithm}
\newtheorem{question}[theorem]{Question}
\theoremstyle{remark}
\newcommand{\ceeil}[1]{\left \lceil #1 \right \rceil }
\newcommand{\alg}{Shortlex Assignment Algorithm}
\tikzset{->-/.style={decoration={
			markings,
			mark=at position #1 with {\arrow{>}}},postaction={decorate}}}
\tikzstyle{vertex}=[circle, draw, inner sep=0pt, minimum size=4pt,fill=black]
\newcommand{\vertex}{\node[vertex]}
\tikzstyle{hollowvertex}=[circle, draw, inner sep=0pt, minimum size=4pt, fill=white]
\newcommand{\hollowvertex}{\node[hollowvertex]}
\tikzstyle{phantomvertex}=[circle, draw, inner sep=0pt, minimum size=4pt,color=white]
\begin{document}
	\title{The Threshold Dimension and Irreducible Graphs}
\author{Lucas Mol\footnote{University of Winnipeg, 515 Portage Avenue, Winnipeg, MB R3B 2E9}, Matthew J. H. Murphy\footnote{University of Toronto, 27 King's College Circle, Toronto, ON M5S 1A1}, and Ortrud R.\ Oellermann$^*$\thanks{Supported by an NSERC Grant CANADA, Grant number RGPIN-2016-05237}\\
	{\small \href{mailto:l.mol@uwinnipeg.ca}{l.mol@uwinnipeg.ca}, \href{mailto:mattjames.murphy@mail.utoronto.ca}{mattjames.murphy@mail.utoronto.ca},}\\ 
{	\small\href{mailto:o.oellermann@uwinnipeg.ca}{o.oellermann@uwinnipeg.ca}}}
\date{}

\maketitle
\begin{abstract}

Let $G$ be a graph, and let $u$, $v$, and $w$ be vertices of $G$. If the distance between $u$ and $w$ does not equal the distance between $v$ and $w$, then $w$ is said to \emph{resolve} $u$ and $v$.	The \emph{metric dimension} of $G$, denoted $\beta(G)$, is the cardinality of a smallest set $W$ of vertices such that every pair of vertices of $G$ is resolved by some vertex of $W$.
The \emph{threshold dimension} of $G$, denoted $\tau(G)$,  is the minimum metric dimension among all graphs $H$ having $G$ as a spanning subgraph. In other words, the threshold dimension of $G$ is the minimum metric dimension among all graphs obtained from $G$ by adding edges.  If $\beta(G) = \tau(G)$, then $G$ is said to be \emph{irreducible}.

	We give two upper bounds for the threshold dimension of a graph, the first in terms of the diameter, and the second  in terms of the chromatic number.  As a consequence, we show that every planar graph of order $n$ has threshold dimension $ O (\log_2 n)$. We show that several infinite families of graphs, known to have metric dimension $3$, are in fact irreducible.  Finally, we show that for any integers $n$ and $b$ with $1 \leq b < n$, there is an irreducible graph of order $n$ and metric dimension $b$.
	
\end{abstract}

\section{Introduction} \label{introduction}

Slater \cite{Slater1975}, being motivated by the problem of uniquely determining the location of an intruder in a network, first introduced the notion of `resolvability' in graphs.  For vertices $x$ and $y$ of a graph $G$, let $d_G(x,y)$ denote the distance between $x$ and $y$ in $G$.  We write $d(x,y)$ in place of $d_G(x,y)$ if $G$ is clear from context.  A vertex $w$ is said to \emph{resolve} a pair $u,v$ of vertices in $G$ if $d(u,w)\neq d(v,w)$.
A set $W \subseteq V(G)$ of vertices \emph{resolves} the graph $G$, and we say that $W$ is a \emph{resolving set} for $G$, if every pair of vertices of $G$ is resolved by some vertex of $W$. A smallest  resolving set of $G$ is called a \emph{basis} of $G$, and its cardinality is called the \emph{metric dimension} of $G$, denoted $\beta(G)$. Since being introduced by Slater \cite{Slater1975}, and independently by Harary and Melter \cite{HararyMelter1976},  the metric dimension has been studied extensively.
See the work of C\'aceres et al.~\cite{Caceresetal2007} for an extensive list of publications related to the theoretical aspects of the metric dimension, and the work of Belmonte et al.~\cite{Belmonteetal2015} for an extensive list of publications related to the computational aspects of the metric dimension.  Henceforth, when we say dimension in this paper, unless qualified, we are referring to the metric dimension.

The question of how the metric dimension of a graph relates to that of its subgraphs has been studied, for example, by Chartrand et al.~\cite{Chartrandetal2000} and Khuller et al.~\cite{Khulleretal1996}. In this article, we focus on the metric dimension of those graphs that have a given graph $G$ as a \emph{spanning} subgraph. Suppose that distance detecting devices can be installed at nodes (vertices) of a network $G$ that indicate the distance to an intruder in the network. If $W$ is a resolving set for $G$, and if a detecting device is installed at each node of $W$, then these devices can uniquely determine the location of an intruder in the network.  It is natural to ask whether the number of detecting devices that are needed can be reduced if additional edges are added to the existing network.  The \emph{threshold dimension} of a graph $G$, denoted $\tau(G)$, is defined as $\min\{\beta(H)\colon\ H ~\mbox{contains}~ G ~\mbox{as a spanning subgraph}\}$.  A graph $H$ having $G$ as a spanning subgraph and such that $\beta(H)=\tau(G)$ is called a \emph{threshold graph} of $G$. A graph $G$ is called \emph{irreducible} if $\tau(G)=\beta(G)$; otherwise, it is called \emph{reducible}.

The threshold dimension of a graph was introduced in a recent article by the current authors~\cite{us}, in which the following statements were proven:
\begin{itemize}
\item There is a geometric interpretation of the threshold dimension of a graph, in terms of a minimum number of strong products of paths (each of sufficiently large order) that admits a certain type of embedding of the graph.  
\item Every tree $T$ with $\beta(T) \geq 3$ is reducible.
\item Every tree with dimension $3$ or $4$ has threshold dimension $2$.
\item There exist trees with arbitrarily large metric dimension having threshold dimension $2$.
\end{itemize}


In this article, we continue the study of the threshold dimension of a graph.  Section~\ref{preliminaries} is devoted to some preliminaries. In Section~\ref{generalbounds}, we present two upper bounds on the threshold dimension of a graph $G$; the first in terms of the diameter of $G$, and the second in terms of the chromatic number of $G$. The latter bound is shown to be sharp. In Section~\ref{irrgraphs}, we focus on irreducible graphs.  We show that the highly symmetric graphs of metric dimension $3$ studied by Javaid et al.~\cite{Javaidetal2008} are irreducible, and that for every $n\geq 4$ and $b\in\{3,\dots, n-1\}$, there is an irreducible graph of order $n$ and dimension $b$.

\section{Preliminaries} \label{preliminaries}

Let $G$ be a graph. We let $V(G)$ denote the vertex set of $G$, and $E(G)$ denote the edge set of $G$.  The diameter of $G$ is denoted $\mbox{diam}(G)$.  A shortest path between two vertices $u,v\in V(G)$ is called a \emph{diametral path} of $G$ if it has length $\mbox{diam}(G)$.  The minimum degree among all vertices of $G$ is denoted $\delta(G)$, and the maximum degree among all vertices of $G$ is denoted $\Delta(G)$.  The chromatic number of $G$ is denoted $\chi(G)$.  We adopt the convention that $\beta(K_1)=0$.

The complement of $G$ is denoted by $\overline{G}$.  For any set $S\subseteq E\left(\overline{G}\right)$, the graph obtained from $G$ by adding the edges of $S$ is denoted by $G+S$.  For disjoint graphs $G$ and $H$, the \emph{join} of $G$ and $H$, denoted $G \vee H$, is the graph on vertex set $V(G)\cup V(H)$ and edge set $E(G)\cup E(H)\cup\{uv\colon\ u\in V(G) \mbox{ and } v \in V(H)\}$.  For a graph $G$ and a positive integer $k$, let $G^k$ denote the $k$th power of $G$, that is, the graph with vertex set $V(G)$ and edge set $\{uv\colon\ d_G(u,v)\leq k\}$.

Let $v$ be a vertex of $G$.  The $k$-\emph{neighbourhood} of $v$ in $G$, denoted $N_k^G(v)$, is the set of vertices in $G$ whose distance from $v$ is exactly $k$.  We usually use $N^G(v)$ instead of $N_1^G(v)$.  Let $W\subseteq V(G)$. Then the $W$-\emph{neighbourhood} of $v$ in $G$, denoted $N^G_W(v)$, is defined as $N_W^G(v)=N^G(v) \cap W$. A vertex $v$ is said to be $W$-\emph{universal} in $G$ if $N_W^G(v)=W$; i.e., if $v$ is adjacent to all vertices in $W$.  Whenever the graph $G$ is clear from context, we omit the superscript in $N_k^G(v)$ and $N_W^G(v)$.

In the sequel, the key idea used in establishing an upper bound for the threshold dimension of a graph $G$ is to find a set $W$ of vertices for which it is possible to add edges to $G$ in such a manner that every two vertices in $G-W$ have distinct $W$-neighborhoods in the resulting graph. For a set $S$, we use $\mathcal{P}(S)$ to denote the power set of $S$.
Let $G$ be a graph, and let $P,W\subseteq V(G)$ satisfy the following conditions:
\begin{enumerate}
\item $W\cap P=\emptyset$;
\item $|P|\leq 2^{|W|}$; and
\item for every pair of vetices $u,v\in P$ with nonempty $W$-neighbourhoods, we have $N_W(u)\neq N_W(v)$.
\end{enumerate}
Then the following algorithm outputs a set of edges $\mathcal{E}$ such that all vertices in $P$ have distinct $W$-neighbourhoods in $G+\mathcal{E}$.  In particular, this means that $W$ resolves $P$ in $G+\mathcal{E}$.

\begin{algorithm}[The Shortlex Assignment Algorithm] \label{AA}
 Input a graph $G$, along with sets $P,W \subseteq V(G)$ satisfying conditions (i)-(iii) above. Let $W=\{x_1,\dots,x_k\}$ and $P=\{u_1,\dots,u_r\}\cup\{v_1,\dots,v_s\}$, where $N_W^G(u_i)\neq \emptyset$ for all $i\in\{1,\dots,r\}$ and $N_W^G(v_j)=\emptyset$ for all $j\in\{1,\dots,s\}$.
\begin{enumerate}
\item \textbf{Collect subsets:} Let $\mathcal{N}=\{N_W(u_i)\colon\ 1 \leq i \leq r\}$. Let $\mathcal{S}=\mathcal{P}(W)-\mathcal{N}$.
\item \textbf{Sort subsets:} Sort $\mathcal{S}$ using the shortlex ordering, i.e., sort first by cardinality, with the smallest subsets appearing first, and then lexicographically within each cardinality, with $x_1 < ... < x_k$. Let $\mathcal{S}=\{S_1,\dots,S_{|\mathcal{S}|}\}$, where $|S_1|<\ldots < |S_{|\mathcal{S}|}|$. (By condition (ii), we have $|\mathcal{S}|=2^{|W|}-r \geq |P|-r=s$.)
\item \textbf{Output edges:} For all $j\in \{1,\dots,s\}$, let $E_j=\{v_jx\colon\ x \in S_{j}\}$. Let $\mathcal{E}=\bigcup_{j=1}^sE_j$.  Output $\mathcal{E}$.
\end{enumerate}
\end{algorithm}

In the graph $G+\mathcal{E}$, the $W$-neighbourhood of $v_j$ is exactly $S_j$.  In other words, the algorithm \emph{assigns} the $W$-neighbourhood $S_j$ to the vertex $v_j$.
The shortlex ordering guarantees that some vertex in $P$ has empty $W$-neighbourhood in $G+\mathcal{E}$ (unless $s=0$), and that no vertex in $P$ is $W$-universal in $G+\mathcal{E}$ (unless some $u_i$ is $W$-universal or $|P|=2^{|W|}$).  We will also make use of the \emph{Reverse Shortlex Assignment Algorithm}, which is the same as the Shortlex Assignment Algorithm, except at step 2, the reverse shortlex ordering is used.  This guarantees that some vertex in $P$ is assigned the entire set $W$ (unless $s=0$), i.e., that some vertex in $P$ is $W$-universal in $G+\mathcal{E}$.

\smallskip

We now state two elementary results which will be useful in several parts of the paper.
%
%
%
The first is a generalization of the fact that no graph of metric dimension $2$ has $K_5$ as a subgraph~\cite[Theorem 3.2]{Khulleretal1996}. Khuller et al.~\cite{Khulleretal1996} noted that the result could be generalized, and the proof of the following lemma is indeed straightforward.


\begin{lemma}\label{kn}
	Let $G$ be a graph with $K_n$ as a subgraph. Then $\beta(G)\geq \ceeil{\log_2 n}.$
\end{lemma}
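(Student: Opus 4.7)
The plan is to bound the number of possible distance vectors that vertices of the $K_n$ subgraph can produce with respect to a resolving set. Let $K$ denote the vertex set of the $K_n$ subgraph, and let $W$ be any resolving set of $G$. Since $W$ resolves $G$, the distance vectors $r(v|W)=(d(v,w))_{w\in W}$ must be pairwise distinct as $v$ ranges over $K$. It therefore suffices to show that these vectors take at most $2^{|W|}$ values in total; this immediately gives $n\leq 2^{|W|}$, hence $\beta(G)\geq\lceil\log_2 n\rceil$.

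The key observation is a two-values-per-coordinate property. Fix $w\in W$ and let $a_w=\min_{v\in K} d(w,v)$. Because any two vertices of $K$ are adjacent in $G$ (so at distance exactly $1$ in $G$), the triangle inequality yields $|d(w,v)-d(w,v')|\leq 1$ for every $v,v'\in K$. Hence $d(w,v)\in\{a_w,a_w+1\}$ for all $v\in K$. This argument uniformly handles the case $w\in K$ as well, since then $a_w=0$ and the distances from $w$ to vertices of $K$ are either $0$ (for $w$ itself) or $1$ (for the remaining vertices).

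Applying the observation in each of the $|W|$ coordinates, the distance vector $r(v|W)$ takes at most $2^{|W|}$ distinct values as $v$ varies over $K$. Combining with the resolving property $n\leq 2^{|W|}$, and taking logarithms, we conclude $|W|\geq\lceil\log_2 n\rceil$. As $W$ was an arbitrary resolving set, this bound holds for a basis of $G$, giving $\beta(G)\geq\lceil\log_2 n\rceil$. There is no real main obstacle here, as the authors already note: the only care needed is to define $a_w$ via a minimum so that the cases $w\in K$ and $w\notin K$ are treated in one stroke.
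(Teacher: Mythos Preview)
Your argument is correct and is precisely the standard two-values-per-coordinate counting that the paper has in mind; the paper itself omits the proof, merely remarking that it ``is indeed straightforward'' as a generalization of the $K_5$ case from Khuller et al. There is nothing to add or correct.
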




We also use a tight lower bound on the metric dimension of a graph of order $n$ and diameter $2$, proven by Khuller et al.~\cite{Khulleretal1996}, and independently by Chartrand et al.~\cite{Chartrandetal2000}.  We note that a tight bound on the metric dimension of a graph of any given order $n$ and diameter $d$ was later proven by Hernando et al.~\cite{Hernandoetal2010}, but we only need the special case $d=2$.  Define $g: (1, \infty) \rightarrow \mathbb{N}$ as follows: $g(x)$ is the smallest integer $d$ such that $2^d+d\geq x$, i.e., we have $g(x)=d$ if and only if $x \in (2^{d-1}+d-1, 2^d+d]$.

\begin{lemma}\label{weakbound}
Let $G$ be a graph of order $n$ and diameter $2$. Then $\beta(G)\geq g(n)$.
\end{lemma}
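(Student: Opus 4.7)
The plan is to use a direct counting argument based on the restricted range of distances in a diameter-$2$ graph. Let $W=\{w_1,\dots,w_k\}$ be a minimum resolving set of $G$, so $k=\beta(G)$. For each $v\in V(G)$, define the \emph{representation} of $v$ with respect to $W$ as the vector
\[
r(v\mid W)=(d(v,w_1),\dots,d(v,w_k)).
\]
Since $W$ resolves $G$, the map $v\mapsto r(v\mid W)$ is injective on $V(G)$.

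Next I would restrict attention to vertices outside $W$. For any $v\in V(G)\setminus W$ and any $w_i\in W$, we have $d(v,w_i)\geq 1$ because $v\neq w_i$, and $d(v,w_i)\leq 2$ because $\mathrm{diam}(G)=2$. Hence $r(v\mid W)\in\{1,2\}^k$, yielding at most $2^k$ possible representations for vertices of $V(G)\setminus W$. Combining injectivity with $|V(G)\setminus W|=n-k$, I obtain
\[
n-k\leq 2^k,
\]
i.e., $2^k+k\geq n$. By the definition of $g$ as the least integer $d$ with $2^d+d\geq n$, this forces $k\geq g(n)$, giving $\beta(G)\geq g(n)$.

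The proof is essentially a pigeonhole argument, so there is no real obstacle; the only subtle point is observing that the entries of $r(v\mid W)$ lie in $\{1,2\}$ (rather than $\{0,1,2\}$) for $v\notin W$, which is what allows the bound $2^k$ on the number of distinct representations. Everything else follows directly from the definitions of resolving set, diameter, and the function $g$.
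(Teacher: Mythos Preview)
Your proof is correct. The paper does not actually prove Lemma~\ref{weakbound}; it merely states the result with attribution to Khuller et al.\ and Chartrand et al., and your pigeonhole argument---restricting representations of vertices outside a basis to $\{1,2\}^k$ and deducing $2^k+k\geq n$---is exactly the standard proof given in those references.
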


\section{Bounds on the threshold dimension of a graph} \label{generalbounds}

In this section, we prove upper bounds on the threshold dimension of a graph.  We begin with a general result from which a bound in terms of diameter follows in a straightforward manner.  The proof relies on a process similar to that of the \alg.

\begin{theorem} \label{diammeth}
Let $G$ be a graph of order $n$. Suppose that there exists a set $W \subseteq V(G)$ and an integer $\ell\in\{0,1,\ldots,|W|-1\}$ such that:
\begin{enumerate}
\item $2^{|W|-\ell}\geq n-|W|$; and
\item for every $x \in V(G)-W$, we have $N_W(x)\leq \ell$.
\end{enumerate}
Then $\tau(G) \leq |W|$.
\end{theorem}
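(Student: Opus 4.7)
The goal is to exhibit a supergraph $H$ of $G$ (obtained by adding edges) in which $W$ is a resolving set, giving $\tau(G)\leq\beta(H)\leq|W|$. The plan, in the spirit of the \alg, is to add edges with one endpoint in $W$ and the other in $P:=V(G)\setminus W$ so that in $H$ the $W$-neighbourhoods of the vertices of $P$ are pairwise distinct. Once this is achieved, $W$ resolves $H$: any two $u,v\in P$ are distinguished by any $w\in W$ that lies in exactly one of $N^H_W(u)$ and $N^H_W(v)$ (that $w$ is at distance $1$ from one of $u,v$ and at distance at least $2$ from the other), while any pair meeting $W$ is resolved by the vertex of $W$ itself.

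To build the required edges, I would enumerate $P=\{v_1,\ldots,v_{|P|}\}$ in an arbitrary order and greedily select a target subset $B_i\subseteq W$ for each $v_i$, subject to $N^G_W(v_i)\subseteq B_i$ and $B_i\notin\{B_1,\ldots,B_{i-1}\}$. I would then add the edges $\{v_ix\colon x\in B_i\setminus N^G_W(v_i)\}$, which installs $B_i$ as the $W$-neighbourhood of $v_i$. Because every added edge joins a vertex of $P$ to a vertex of $W$, it does not disturb the $W$-neighbourhood of any $v_j$ with $j\neq i$; hence after processing all vertices, the $W$-neighbourhood of $v_j$ in $H$ is exactly $B_j$, and these sets are pairwise distinct by construction.

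The only real obstacle is showing that the greedy step never fails, and this is exactly the purpose of the two hypotheses. At step $i$ the supply of candidate $B_i$'s is the collection of supersets of $N^G_W(v_i)$ inside $\mathcal{P}(W)$, of size $2^{|W|-|N^G_W(v_i)|}$. By condition (ii) this is at least $2^{|W|-\ell}$, and by condition (i) this is at least $n-|W|=|P|\geq i>i-1$, so a valid choice always exists. In particular, condition (iii) of the \alg (distinct existing $W$-neighbourhoods) need not be assumed here: the $B_i$'s may be drawn from a reservoir strictly larger than the collection of existing $W$-neighbourhoods, so any initial conflicts are resolved on the fly, and no more delicate combinatorial tool (e.g.\ Hall's theorem) is needed because the reservoir is uniform across the vertices of $P$.
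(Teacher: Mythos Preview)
Your proof is correct and essentially identical to the paper's: both enumerate $P=V(G)\setminus W$, greedily assign to each $v_i$ a superset $B_i\supseteq N_W^G(v_i)$ distinct from the previous choices using the count $2^{|W|-|N_W^G(v_i)|}\ge 2^{|W|-\ell}\ge |P|\ge i$, and then add the edges $\{v_ix\colon x\in B_i\setminus N_W^G(v_i)\}$ to realize these neighbourhoods. Your explicit remark that added edges do not alter other vertices' $W$-neighbourhoods, and that pairs meeting $W$ are trivially resolved, simply makes the paper's implicit steps explicit.
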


\begin{proof}
Let $P= V(G)-W$. We show that we can add edges to $G$ so that every vertex $x \in P$ has a unique $W$-neighbourhood.
	
Let $P=\{v_1, ..., v_k\}$, where $k=n-|W|$. We assign to each $v_i$ a distinct subset $S_i$ of $W$ containing $N_W^G(v_i)$ as follows.  We begin by assigning $v_1$ the subset $S_1=N_W^G(v_1)$. Now let $i\geq 2$ and suppose that $v_1, \ldots, v_{i-1}$ have been assigned distinct subsets $S_1,\dots, S_{i-1}$ of $W$ that contain $N_W^G(v_1),\dots,N_W^G(v_{i-1})$, respectively. Since we have $2^{|W|-|N_W^G(v_i)|} \ge 2^{|W|-\ell} \geq n-|W|=k \geq i$, there is a subset $S_i$ of $W$ containing $N_W^G(v_i)$ that is distinct from $S_1,\dots,S_{i-1}$.  Assign $v_i$ the subset $S_i$.

Now let $H=G+\mathcal{E}$, where $\mathcal{E}=\bigcup_{i=1}^k \{v_is\colon\ s\in S_i\backslash N_W^G(v_i)\}$.  For all $i\in\{1,\dots,k\}$, we have $N_W^H(v_i)=S_i$, and since the $S_i$ are all distinct, we conclude that $W$ is a resolving set for $H$.  Therefore, $\tau(G)\leq |W|$.
\end{proof}

As a consequence of Theorem~\ref{diammeth}, we see that if a graph $G$ has sufficiently large diameter (relative to its order), then the threshold dimension of $G$ is bounded above by its diameter.  In fact, if the diameter of $G$ is large enough, then for any diametral path $D$ in $G$, there is a graph $H$ containing $G$ as a spanning subgraph in which $V(D)$ is a resolving set.

\begin{corollary}
Let $G$ be a graph of order $n$ and diameter $d$. If $2^{d-3}\geq n-d$, then $\tau(G)\leq d$.
\end{corollary}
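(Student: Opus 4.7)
The plan is to apply Theorem~\ref{diammeth} taking $W$ to be all but one endpoint of a diametral path. Fix a diametral path $D\colon u_0, u_1, \ldots, u_d$ in $G$ and set $W = V(D)\setminus\{u_0\} = \{u_1,\ldots,u_d\}$, so that $|W| = d$. I will verify the two hypotheses of Theorem~\ref{diammeth} with $\ell = 3$. Condition (i) is then exactly $2^{d-3}\ge n-d$, which is the assumption of the corollary, so the only substantive task is to verify condition (ii), namely that $|N_W(x)|\le 3$ for every $x\in V(G)\setminus W$.

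To that end, I would first establish that $d(u_i,u_j) = |i-j|$ for all $0\le i,j\le d$: if instead $d(u_i,u_j) < |i-j|$ for some $i<j$, then concatenating the initial segment $u_0,\ldots,u_i$ of $D$, a shortest $u_i$-$u_j$ path, and the terminal segment $u_j,\ldots,u_d$ yields a $u_0$-$u_d$ walk of length strictly less than $d$, contradicting $d(u_0,u_d)=d$. This immediately gives $|N_W(u_0)| = 1$, since among $\{u_1,\ldots,u_d\}$ only $u_1$ can be adjacent to $u_0$. Next, for any $x\notin V(D)$, if $x$ is adjacent to both $u_i$ and $u_j$ with $i<j$, then the path $u_i,x,u_j$ forces $j-i = d(u_i,u_j)\le 2$; hence the indices of neighbours of $x$ in $V(D)$ lie in some $\{m,m+1,m+2\}$, so in particular $|N_W(x)|\le 3$. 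This establishes condition (ii), and Theorem~\ref{diammeth} yields $\tau(G)\le |W|=d$.

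Finally, Theorem~\ref{diammeth} requires $\ell\in\{0,1,\ldots,|W|-1\}$, so the argument above applies only when $d\ge 4$. When $d\le 3$, the hypothesis $2^{d-3}\ge n-d$ forces $n\le d+1$, and the trivial bound $\tau(G)\le\beta(G)\le n-1\le d$ handles these leftover cases. The essential content of the proof is thus the neighbourhood bound from the previous paragraph, which is really just the observation that two vertices on a geodesic whose indices differ by more than two cannot share a neighbour off the geodesic; I do not expect any serious obstacle beyond laying this observation out carefully.
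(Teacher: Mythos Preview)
Your proof is correct and uses the same idea as the paper---apply Theorem~\ref{diammeth} with $W$ drawn from a diametral path and $\ell=3$---but your version is actually more careful: the paper takes $W=V(D)$ (of size $d+1$), which via Theorem~\ref{diammeth} literally only yields $\tau(G)\le d+1$, whereas by dropping one endpoint you get $|W|=d$ and hence the bound $\tau(G)\le d$ as stated. Your explicit treatment of the leftover vertex $u_0$ and of the small-diameter cases $d\le 3$ (needed since Theorem~\ref{diammeth} requires $\ell\le |W|-1$) fills details that the paper's proof glosses over.
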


\begin{proof}
Suppose that $2^{d-3}\geq n-d$.  Let $D$ be a diametral path of $G$, and let $W=V(D)$.  Note that every vertex in $V(G)-W$ is adjacent to at most three vertices in $W$, since $D$ is a diametral path.  The result now follows immediately from Theorem~\ref{diammeth}.
\end{proof}


\medskip

We now work towards a bound on the threshold dimension for any graph of order $n$ and chromatic number $k$.  By the following straightforward observation, it suffices to bound the threshold dimension of all complete $k$-partite graphs of order $n$.

\begin{observation} \label{chromobs1}
Let $H$ be a graph that contains $G$ as a spanning subgraph.  Then $\tau(G)\leq \tau(H)$.
\end{observation}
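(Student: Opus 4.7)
The plan is to invoke the definition of threshold dimension directly, exploiting the transitivity of the ``spanning subgraph'' relation. First, I would choose a threshold graph $H^*$ of $H$; that is, a graph $H^*$ containing $H$ as a spanning subgraph with $\beta(H^*) = \tau(H)$, which exists by the definition of $\tau(H)$.

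Next, I would observe that being a spanning subgraph is transitive: since $V(G) = V(H) = V(H^*)$ and $E(G) \subseteq E(H) \subseteq E(H^*)$, the graph $H^*$ contains $G$ as a spanning subgraph. Hence $H^*$ is a candidate in the minimisation that defines $\tau(G)$, so
\[
\tau(G) \;\leq\; \beta(H^*) \;=\; \tau(H),
\]
which is the desired inequality.

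The only step that even warrants comment is the transitivity observation, and that is immediate from the definition of spanning subgraph; there is no genuine obstacle here, which is why the statement is presented as an observation rather than a lemma. The usefulness of the result in the sequel is that, to bound $\tau(G)$ from above for a graph $G$ of chromatic number $k$, it suffices to embed $G$ as a spanning subgraph of a complete $k$-partite graph (via a proper $k$-colouring) and bound the threshold dimension of that supergraph instead.
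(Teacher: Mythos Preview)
Your argument is correct and is exactly the intended reasoning: the paper states this as an observation without proof, and the one-line justification via transitivity of the spanning-subgraph relation together with the definition of $\tau$ is precisely what underlies it.
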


The next result gives the exact value of the threshold dimension of every complete multipartite graph.  Before we proceed, we make some preliminary observations and introduce some notation.

Let $K=K_{x_1,..., x_k}$ be a complete $k$-partite graph. Let $X_1, ..., X_k$ be the partite sets of $K$, where $|X_i|=x_i$ for $1 \leq i \leq k$. Let $H$ be a threshold graph of $K$, and let $W$ be a metric basis for $H$. Let $u$ and $v$ be two vertices of $V(H)-W$. If $u$ and $v$ belong to two distinct partite sets of $K$, say $u \in X_i$ and $v \in X_j$, then either $W$ contains a vertex of $X_i$ which is not adjacent in $H$ to $u$, or $W$ contains a vertex of $X_j$ which is not adjacent in $H$ to $v$. On the other hand, if $u$ and $v$ belong to the same partite set $X_i$, then $W$ must contain some vertex in $X_i$ which resolves $u$ and $v$.

Define the function $f:[1, \infty) \rightarrow \mathbb{N}$ as follows: $f(x)$ is the smallest integer $d$ such that $2^d+d>x$, i.e., we have $f(x)=d$ if and only if $x \in  [2^{d-1}+d-1, 2^d+d)$.  For a given integer $d \ge 1$, define $\ell_d = 2^{d-1}+d-1$; this is the smallest number that $f$ maps to $d$, and these numbers play an important role in the proofs that follow.  Note that we have $\ell_{d}-\ell_{d-1}=2^{d-2}+1$.

\begin{lemma} \label{chrom5}
Let $K=K_{x_1, ..., x_k}$ be a complete $k$-partite graph, and let $S_K=\sum_{i=1}^kf(x_i)$. Then
\[
\tau(K) = T_K:= \begin{cases}
S_K, &\text{if $x_i \ne \ell_{f(x_i)}$ for every $1 \le i \le k$;}\\
S_K-1, &\text{otherwise}.
\end{cases}
\]
\end{lemma}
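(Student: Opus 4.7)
The plan is to prove matching upper and lower bounds by exploiting the structure of a complete multipartite graph. The first key observation is that any spanning supergraph $H$ of $K$ is obtained from $K$ by adding edges only within partite sets (since all cross-partite edges are already present), and hence $\mathrm{diam}(H)\le 2$. Thus for any two distinct $u,w\in V(H)$, $d_H(u,w)\in\{1,2\}$, so $w$ resolves a pair $u,v$ iff $w$ is adjacent to exactly one of them. Consequently $W\subseteq V(H)$ is resolving iff the vertices of $V(H)\setminus W$ all have distinct $W$-neighbourhoods in $H$.

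Writing $W_i = W\cap X_i$, $w_i=|W_i|$, and using that for $v \in X_j\setminus W_j$ one has $N_W^H(v)=\bigcup_{i\ne j}W_i \cup N_{W_j}^H(v)$, this resolvability condition decouples into: (a) for each $i$, the vertices of $X_i\setminus W_i$ have pairwise distinct $W_i$-neighbourhoods in $H$; and (b) at most one vertex in $V(H)\setminus W$ is $W$-universal, equivalently, at most one index $i$ admits a vertex $u\in X_i\setminus W_i$ with $N_{W_i}^H(u)=W_i$ (two such vertices in distinct partite sets would share the $W$-neighbourhood $W$).

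For the upper bound $\tau(K)\le T_K$, I would pick $W_i\subseteq X_i$ with $|W_i|=f(x_i)$ for every $i$, except that if some index $j$ satisfies $x_j=\ell_{f(x_j)}$, I fix one such $j$ and instead take $|W_j|=f(x_j)-1$. Within each $X_i$ I would invoke the Shortlex Assignment Algorithm (Algorithm~\ref{AA}) to add edges so that the vertices of $X_i\setminus W_i$ receive distinct subsets of $W_i$ as their $W_i$-neighbourhoods. The bound $x_i-f(x_i)\le 2^{f(x_i)}-1$, which is immediate from the definition of $f$, guarantees that the full set $W_i$ can be avoided in every generic part; in the special part the identity $\ell_{f(x_j)}-(f(x_j)-1)=2^{f(x_j)-1}$ forces every subset of $W_j$, including $W_j$ itself, to be assigned exactly once. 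Conditions (a) and (b) then hold, so $W=\bigcup_i W_i$ resolves the augmented graph and has size $T_K$.

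For the lower bound, I would start with an arbitrary spanning supergraph $H$ of $K$ and any resolving set $W$, and turn (a) and (b) into numerical constraints on the $w_i$. Condition (a) yields $x_i-w_i\le 2^{w_i}$, equivalently $x_i\le 2^{w_i}+w_i$; condition (b) strengthens this to $x_i\le 2^{w_i}+w_i-1$ for all but at most one index. The weak inequality is equivalent to $w_i\ge f(x_i)-1$, with equality possible only when $x_i=\ell_{f(x_i)}$; the strict inequality forces $w_i\ge f(x_i)$. Summing over $i$ then yields $|W|\ge T_K$. The main obstacle is the careful bookkeeping around the threshold values $\ell_d$: one has to verify precisely that the saving $w_i=f(x_i)-1$ is available only when $x_i=\ell_{f(x_i)}$, and that exercising this saving in one part consumes the single permitted $W$-universal vertex, so no second part can save simultaneously.
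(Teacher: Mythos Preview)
Your proposal is correct and follows essentially the same approach as the paper's proof. Your explicit decomposition of the resolving condition into (a) distinct $W_i$-neighbourhoods within each part and (b) at most one $W$-universal vertex outside $W$ is a clean reformulation of what the paper argues more directly, but the construction for the upper bound (Shortlex assignment with one part of size $f(x_j)-1$ when $x_j=\ell_{f(x_j)}$) and the counting argument for the lower bound (the saving $w_i=f(x_i)-1$ forces $x_i=\ell_{f(x_i)}$ and a $W_i$-universal vertex, hence can occur for at most one index) are identical in substance to the paper's.
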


\begin{proof}
	Let $K$ and $c$ be as above, and let $X_1, X_2, \ldots, X_k$ be the partite sets of $K$, where $|X_i|=x_i$ for $1 \le i \le k$.  We first show that $\tau(K)\leq T_K$.
	
	First suppose that $x_i\neq\ell_{f(x_i)}$ for every $i\in\{1,\ldots,k\}$.  We construct a graph $H$ that contains $G$ as a spanning subgraph and has a resolving set of cardinality $S_K$.  For all $i\in\{1,\dots,k\}$, let $W_i$ be a set of $f(x_i)$ vertices from $X_i$, and let $P_i=X_i-W_i$.  Since $X_i$ is an independent set in $G$, the $W_i$-neighbourhood of every vertex in $P_i$ is empty.  By the definition of $f$, we have $x_i<2^{f(x_i)}+f(x_i),$ hence
\[
|P_i|=|X_i|-|W_i|=x_i-f(x_i)<2^{f(x_i)}=2^{|W_i|}.
\]
Thus, for every $i$, we may apply the Shortlex Assignment Algorithm with inputs $G$, $W_i$, and $P_i$, and no vertex of $P_i$ is assigned the entire set $W_i$.  Let $\mathcal{E}_i$ be the set of edges output by the algorithm.  Let $\mathcal{E}=\bigcup_{i=1}^k\mathcal{E}_i$, and define $H=G+\mathcal{E}$.  We claim that $W=\bigcup_{i=1}^k W_i$ is a resolving set for $H$.  Let $u$ and $v$ in $V(K)-W$.  If $u$ and $v$ belong to the same set $X_i$, then $N_{W_i}(u)\neq N_{W_i}(v)$, and thus $u$ and $v$ are resolved by some vertex in $W_i$.  Otherwise, we may assume that $u\in X_i$ and $v\in X_j$ for some $i<j$ (switching the labels of $u$ and $v$ if necessary).  Then $u$ is $W_j$-universal, while $v$ is not.  So $u$ and $v$ are resolved by some vertex from $X_j$.

	Now suppose that $x_i=\ell_{f(x_i)}$ for some $i\in\{1,\ldots,k\}$.  Without loss of generality, say $x_1=\ell_{f(x_1)}$.  We construct a graph $H$ that contains $G$ as a spanning subgraph and has a resolving set of cardinality $S_K-1.$  Let $W_1$ be a set of $f(x_1)-1$ vertices from $X_1$, and for all $i\geq 2$, let $W_i$ be a set of $f(x_i)$ vertices from $X_i$.  For all $i\in\{1,\dots,k\}$, let $P_i=X_i-W_i$.  Since $x_1=\ell_{f(x_1)}=2^{f(x_1)-1}+f(x_1)-1$, we have
\[
|P_1|=|X_1|-|W_1|=x_1-f(x_1)+1=2^{f(x_1)-1}=2^{|W_1|}.
\]
As above, we have $|P_i|<2^{|W_i|}$ for all $i\geq 2$.  Thus, for every $i$, we may apply the Shortlex Assignment Algorithm with inputs $G$, $W_i$, and $P_i$.  Since $|P_1|=2^{|W_1|}$, some vertex of $P_1$ is assigned the entire set $W_1$, but for every $i\geq 2$, no vertex of $P_i$ is assigned the entire set $W_i$.  Let $\mathcal{E}_i$ be the set of edges output by the algorithm.  Let $\mathcal{E}=\bigcup_{i=1}^k\mathcal{E}_i$, and define $H=G+\mathcal{E}$.  We claim that $W=\bigcup_{i=1}^k W_i$ is a resolving set for $H$.  The proof is the same as in the previous case.  (We insisted that $i<j$ in the previous case so that $v\not\in X_1$, guaranteeing that $v$ is not $W_j$-universal.)
	
%
	
We now prove that $\tau(K)\geq T_K$. Let $H$ be a graph containing $K$ as a spanning subgraph. Let $W\subseteq V(H)$ be a resolving set for $H$. We show that $|W| \geq T_K$. From the remark prior to Lemma~\ref{chrom5}, no vertex from $V(H)-X_i$ resolves any pair of vertices from $X_i$, for every $1 \leq i \leq k$.  So $W_i:=W\cap X_i$ must resolve $X_i$. Further, since $K$ (and hence $H$) has diameter $2$, all vertices in $X_i-W_i$ must have distinct $W_i$-neighbourhoods.  It follows that we must have $2^{|W_i|}\geq |X_i|-|W_i|$, or equivalently $2^{|W_i|}+|W_i|\geq |X_i|$.
	
First of all, if $x_i\neq \ell_{f(x_i)}$, then we have $x_i>2^{f(x_i)-1}+f(x_i)-1$.  It follows that $|W_i|\geq f(x_i)$.  Otherwise, if $x_i=\ell_{f(x_i)}=2^{f(x_i)-1}+f(x_i)-1$, then we must have $|W_i|\geq f(x_i)-1$.

Now suppose that there exist distinct integers $i$ and $j$ such that $|W_i|=f(x_i)-1$ and $|W_j|=f(x_j)-1$.  Then $x_i=\ell_{f(x_i)}=2^{|W_i|}+|W_i|$ and $x_j=\ell_{f(x_j)}=2^{|W_j|}+|W_j|$.  It follows that some vertex $v_i$ of $X_i-W_i$ is $W_i$-universal, and some vertex $v_j$ of $X_j-W_j$ is $W_j$-universal.  But then $v_i$ and $v_j$ are both $W$-universal, and hence $W$ does not resolve $H$, a contradiction.  This completes the proof that $\tau(K)\geq T_K$.
%
\end{proof}

We now establish a sharp upper bound for the threshold dimension of graphs of order $n$ and chromatic number $k$.

\begin{theorem}\label{chrombound}
	Let $G$ be a graph of order $n$ with $\chi(G)=k$. Then \[\tau(G) \leq k(f(n/k)+1)-1.\] Moreover, this bound is sharp for all $k$.
\end{theorem}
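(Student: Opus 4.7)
The plan is to exploit the $k$-coloring of $G$ to embed it as a spanning subgraph of a complete $k$-partite graph, then apply Lemma~\ref{chrom5} and bound the resulting sum by a linear-programming argument. Given $G$ of order $n$ with $\chi(G)=k$, fix a proper $k$-coloring with class sizes $x_1,\ldots,x_k\geq 1$, so that $G$ is a spanning subgraph of $K=K_{x_1,\ldots,x_k}$. By Observation~\ref{chromobs1} and Lemma~\ref{chrom5},
\[
\tau(G)\leq \tau(K)\leq S_K=\sum_{i=1}^k f(x_i),
\]
so it remains to prove that $\sum_i f(x_i)\leq k(f(n/k)+1)-1$ whenever $x_1,\ldots,x_k$ are positive integers summing to $n$.

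For this numerical inequality, set $d=f(n/k)$, so $n<k\,\ell_{d+1}$. Using the identity $f(x)=|\{j\geq 1\colon \ell_j\leq x\}|$, the sum admits the layered decomposition
\[
\sum_{i=1}^k f(x_i)=\sum_{j\geq 1}N_j, \qquad N_j=|\{i\colon x_i\geq \ell_j\}|,
\]
and writing $c_j=\ell_j-\ell_{j-1}$ (with $\ell_0=0$), the bound $x_i\geq \ell_{f(x_i)}$ gives $\sum_j c_jN_j\leq n$. This reduces the problem to the knapsack LP: maximize $\sum_j N_j$ subject to $\sum_j c_jN_j\leq n$ and $0\leq N_j\leq k$. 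Since $c_j$ is strictly increasing in $j$, the LP optimum is attained by the fractional greedy assignment $N_1=\cdots=N_d=k$ and $N_{d+1}=(n-k\ell_d)/c_{d+1}$, yielding value $kd+(n-k\ell_d)/c_{d+1}$. Because the definition of $d$ forces $n-k\ell_d<k\,c_{d+1}$, this optimum is strictly less than $k(d+1)$, so the integer optimum is at most $k(d+1)-1$, giving the claimed upper bound.

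For sharpness, fix $k\geq 2$ and an integer $d\geq 1$ with $2^{d-1}\geq k$, and consider $K^*=K_{x_1,\ldots,x_k}$ with $x_1=\ell_d+1$ and $x_i=\ell_{d+1}+1$ for $i\geq 2$. A direct computation gives $n/k=\ell_{d+1}+1-(2^{d-1}+1)/k$, which lies in $[\ell_d,\ell_{d+1})$ precisely when $k\leq 2^{d-1}$, so $f(n/k)=d$. Since $f(x_1)=d$, $f(x_i)=d+1$ for $i\geq 2$, and no $x_i$ equals $\ell_{f(x_i)}$, Lemma~\ref{chrom5} yields $\tau(K^*)=d+(k-1)(d+1)=k(d+1)-1$, matching the upper bound. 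The main obstacle is formalizing the LP step; the cleanest route is either Lagrangian duality with multiplier $\lambda=1/c_{d+1}$, or a direct exchange argument showing that any feasible solution with some $N_{j_0}<k$ for $j_0\leq d$ and $N_{j_1}>0$ for $j_1>d+1$ can be strictly improved by shifting weight from $j_1$ to $j_0$.
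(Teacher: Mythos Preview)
Your proof is correct, and your upper-bound argument is genuinely different from the paper's.

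The paper proves $\sum_i f(x_i)\le k(d+1)-1$ by an iterative \emph{rebalancing} argument: assuming some $f(x_k)\ge d+2$, it transfers mass from $x_k$ to $x_1$ (setting $x'_k=\ell_{d_k-1}+1$, $x'_1=x_1+x_k-x'_k$), then uses the exact formula of Lemma~\ref{chrom5} --- including the $S_K$ versus $S_K-1$ distinction --- together with a three-case analysis to show $\tau(K_{x'_1,\ldots,x'_k})\ge \tau(K_{x_1,\ldots,x_k})$, and repeats until every $f(x_i^*)\le d+1$. Your route instead bypasses the iteration entirely: the layered identity $\sum_i f(x_i)=\sum_{j\ge 1}N_j$ together with $\sum_j c_jN_j=\sum_i \ell_{f(x_i)}\le n$ reduces the numerical inequality to a fractional knapsack LP, whose optimum is strictly below $k(d+1)$ by the greedy/Lagrangian argument you sketch. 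This is cleaner, avoids the case analysis, and only needs the crude bound $\tau(K)\le S_K$ from Lemma~\ref{chrom5} rather than its exact value. (The paper's approach, on the other hand, extracts a bit more: it shows that $\tau(K_{x_1,\ldots,x_k})$ is maximized over partitions of $n$ into $k$ parts by a nearly-balanced configuration.)

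Your sharpness construction is the same as the paper's (the paper's condition $\ell_{d+1}-\ell_d>k$ is exactly your $2^{d-1}\ge k$). One small remark: in your ``precisely when $k\le 2^{d-1}$'' sentence, the lower bound $n/k\ge \ell_d$ actually holds for all $k\ge 1$, so only the upper bound is constraining --- but the conclusion is correct as stated.
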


\begin{proof}
	Let $X_1, ..., X_k$ be a partition of $V(G)$ into $k$ nonempty independent sets. Let $|X_i|=x_i$ for $1 \leq i \leq k$, and assume without loss of generality that $x_1 \leq ... \leq x_k$. By Observation \ref{chromobs1}, it is sufficient to show that $\tau(K_{x_1, ..., x_k}) \leq k(f(n/k)+1)-1$. Let $d=f(n/k)$, and let $f(x_i)=d_i$ for $1 \leq i \leq k$. By Lemma \ref{chrom5}, we have $\tau(K_{x_1, ..., x_k}) \leq \sum_{i=1}^{k}d_i$. Observe that $d_1 \leq d$,  otherwise  $n = \sum_{i=1}^kx_i \geq k(2^{d}+d)$, which implies that $f(n/k)\geq d+1$, a contradiction. Further, if we have $d_i \leq d+1$ for all $2\leq i\leq k$, then the statement holds. So suppose that $d_k \ge d+2$.  We demonstrate the existence of a set $\{x^*_1, ..., x^*_k\}$ of positive integers such that
\begin{enumerate}[label=(\roman*)]
	\item $\sum_{i=1}^k x^*_i=n$;
	\item $\tau(K_{x^*_1, ..., x^*_k}) \geq \tau(K_{x_1, ..., x_k})$; and
	\item for all $1 \leq i \leq k$, we have $f(x^*_i) \leq d+1$.
\end{enumerate}
We claim that the theorem statement follows from this fact.  Suppose that such a set exists, and reorder if necessary so that $x_1^*\leq \cdots \leq x^*_k$.  Then by the above argument, we have $f(x^*_1)\leq d$, and together with (ii) and (iii), this gives the theorem statement.

Define $x'_k=\ell_{d_k-1}+1$ and $x'_1=x_1+x_k-x'_k$, and for all $2 \leq i \leq k-1$, define $x'_i=x_i$.  Note that $\sum_{i=1}^{k}x'_i=n$ and $f(x'_k)=f(x_k)-1$.  Since $d_k\geq d+2$, we also have
\begin{align}\label{eqn1}
x_k-x'_k \geq \ell_{d_k}-\ell_{d_k-1}-1=2^{d_k-2} \ge 2^d.
\end{align}
We now show that $\tau(K_{x'_1,\dots, x'_k})\geq \tau(K_{x_1,\dots,x_k})$.  Since $x_1 \in [2^{d_1-1}+d_1-1, 2^{d_1}+d_1)$ (and $d_1\leq d$), we may consider the following three cases.
	
	\noindent \textbf{Case 1:} $f(x'_1)=f(x_1)+1$ and $x'_1 = \ell_{d_1+1}$.
	
\noindent In this case, we have $2^{d_1} + d_1=x'_1= x_1+x_k-x'_k \ge x_1+2^d \ge 2^{d_1-1}+d_1-1+2^d$. So $2^{d_1-1} + 1 \ge 2^d$. Since $d \ge d_1$, this is only possible if $d=d_1=1$ and $x_k-x'_k =2$. Thus, by (\ref{eqn1}), we have $d_k=3$ and $x_k=\ell_3=6$.  Hence $x'_1=\ell_{f(x'_1)}$ and $x_k=\ell_{f(x_k)}$.  By Lemma~\ref{chrom5}, it follows that
\[
\tau(K_{x'_1,\dots, x'_k})=\sum_{i=1}^kf(x'_i)-1=\sum_{i=1}^kf(x_i)-1=\tau(K_{x_1,\dots,x_k}).
\]
	
	\smallskip
	
	\noindent \textbf{Case 2:} $f(x'_1)=f(x_1)+1$ and  $x'_1 \neq \ell_{d_1+1}$.
	
	\noindent In this case $\sum_{i=1}^kf(x'_i)=\sum_{i=1}^kf(x_i)$.  Further,  $x'_1\neq \ell_{f(x'_1)}$ and $x'_k\neq\ell_{f(x'_k)}$. Hence if $x'_i=\ell_{f(x'_i)}$ for some $i\in\{1,\dots, k\}$, then in fact  $i\in \{2,\dots, k-1\}$, and $x_i=\ell_{f(x_i)}$ as well.  It follows that $\tau(K_{x'_1,\dots, x'_k})=\tau(K_{x_1,\dots,x_k}).$
	
	\smallskip
	
	\noindent \textbf{Case 3:} $f(x'_1) \geq f(x_1)+2$.

	\noindent In this case,
	\[
	\tau(K_{x'_1,\dots,x'_k})\geq \sum_{i=1}^k f(x'_i)-1\geq \sum_{i=1}^k f(x_i)\geq \tau(K_{x_1,\dots,x_k}).
	\]

\smallskip

\noindent
This completes the proof that $\tau(K_{x'_1,\dots, x'_k})\geq \tau(K_{x_1,\dots,x_k})$.

Finally, note that $x'_1 < x_k$.  It follows that $\max\{x'_i\colon\ 1 \le i \le k\} \leq $ $\max\{x_i\colon 1 \le i \le k\}$, with equality if and only if $x_{k-1} = x_k$. Hence, we may repeatedly apply the entire process described above, and we will eventually reach a set $\{x^*_1,\dots,x^*_k\}$ satisfying conditions (i)-(iii).
	
	
	We now prove that the bound is sharp. For every $k\geq 2$, we show that there is an infinite family of complete $k$-partite graphs whose threshold dimension meets the given upper bound.  Fix $k\geq 2$.  Let $d_0$ be the smallest positive integer such that $2^{d_0-2}+1>k$. (Equivalently, $d_0$ is the smallest positive integer such that $\ell_{d_0+1}-\ell_{d_0}>k$.) For any $d\geq d_0$, let $x_{1}= \ell_{d}+1$, and let $x_{i} =\ell_{d+1}+1$ for all $i\in\{2,\dots,k\}$.  Let $n= \sum_{i=1}^{k}x_{i}$. 
Using the fact that $\ell_{d+1}-\ell_{d}>k$, it is easy to verify that $\ell_{d}<n/k<\ell_{d+1}$. Hence  $f(n/k)=d$. By Lemma \ref{chrom5}, we have 
\[
\tau(K_{x_{1},\dots,x_{k}})=(k-1)(d+1)+d=k(d+1)-1=k(f(n/k)+1)-1,
\]
as desired.
\end{proof}

By the Four Colour Theorem and Theorem~\ref{chrombound}, we obtain the following upper bound on the threshold dimension of every planar graph.

\begin{corollary}
Let $G$ be a planar graph of order $n$. Then
\[
\tau(G) \leq 4(f(n/4)+1)-1\leq 4\ceeil{\log_2(n)}-5.
\]
\end{corollary}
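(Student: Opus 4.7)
The plan is to combine the Four Colour Theorem with Theorem~\ref{chrombound} and then estimate $f(n/4)$ in terms of $\lceil \log_2 n \rceil$. For the first inequality, since $G$ is planar, the Four Colour Theorem gives $\chi(G) \leq 4$. Provided $n \geq 4$, any proper $4$-colouring can be refined (splitting larger colour classes if necessary) into a partition of $V(G)$ into exactly $4$ nonempty independent sets, so Observation~\ref{chromobs1} together with Theorem~\ref{chrombound} yields $\tau(G) \leq 4(f(n/4)+1)-1$. (For $n<4$, the trivial bound $\tau(G)\leq n-1$ already suffices.)

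For the second inequality, I would show the equivalent statement $f(n/4) \leq \lceil \log_2 n \rceil - 2$. Set $d=f(n/4)$, so by the definition of $f$ we have $n/4 \geq \ell_d = 2^{d-1}+d-1$, and hence
\[
n \;\geq\; 2^{d+1}+4(d-1).
\]
For $d\geq 2$ the right-hand side strictly exceeds $2^{d+1}$, so $n>2^{d+1}$ and therefore $\lceil \log_2 n \rceil \geq d+2$, as required. The case $d=1$ is a short separate check: then $f(n/4)+1=2$, and one verifies directly that $\lceil \log_2 n\rceil\geq 3$ holds whenever $n\geq 5$, while the very small cases $n\leq 4$ are handled by the trivial bound as above.

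The argument is essentially bookkeeping: there is no real obstacle, and the only mild nuisance is making sure that a proper $4$-colouring can always be promoted to a partition into exactly four nonempty independent classes (to meet the hypothesis of Theorem~\ref{chrombound} as used here), and confirming the small-$n$ boundary case where $\lceil\log_2 n\rceil$ is tight.
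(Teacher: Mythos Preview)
Your proposal is correct and follows exactly the paper's approach: the paper simply remarks that the corollary follows from the Four Colour Theorem together with Theorem~\ref{chrombound}, and you have filled in the routine details (the partition into four nonempty classes via Observation~\ref{chromobs1}, and the verification that $f(n/4)\le \lceil\log_2 n\rceil-2$). Your caution about very small $n$ is warranted---the middle term $4(f(n/4)+1)-1$ actually exceeds $4\lceil\log_2 n\rceil-5$ when $n=4$---but this is a looseness in the stated corollary rather than a flaw in your argument.
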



\section{Irreducible Graphs} \label{irrgraphs}

In this section, we focus on finding irreducible graphs. In our previous paper~\cite{us}, we mentioned that every graph of order $n$ and metric dimension $1$, $2$, or $n-1$ is irreducible.  But in general, it seems that irreducible graphs are more difficult to find than reducible graphs.  In Subsection~\ref{familiesirrpet}, we present two infinite families of graphs which are known to have metric dimension $3$, and we show that these graphs are irreducible.  In Subsection~\ref{prop}, we construct an irreducible graph of every order $n$ and dimension $b$, where $1\leq b<n$.

\subsection{Some irreducible graphs of dimension 3} \label{familiesirrpet}

We begin by showing that every graph of dimension $3$ and minimum degree at least $4$ is irreducible.  This is actually a straightforward corollary of the following result proven by Hernando et al.~\cite{Hernandoetal2010}, and reproven in our earlier work on the threshold dimension~\cite{us}.

\begin{lemma}\label{2dim}
Let $G$ be a graph with dimension $b$. If $\{w_1, ..., w_b\}$ is a basis for $G$, then for each $1 \leq i \leq b$  and for each $1 \leq k \leq \mathrm{diam}(G)$, we have $|N_k(w_i)| \leq (2k+1)^{b-1}$.
\end{lemma}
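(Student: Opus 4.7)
The plan is to use the standard metric-code counting argument. For a basis $W=\{w_1,\dots,w_b\}$, the key object is the \emph{metric code} $r(v)=(d(v,w_1),\dots,d(v,w_b))$, which must differ between distinct vertices $v$ because $W$ is a resolving set. First I would fix $i$ and $k$, and restrict attention to vertices $v\in N_k(w_i)$. For each such $v$, the $i$-th coordinate of $r(v)$ is forced to equal $k$, so the codes of vertices in $N_k(w_i)$ can differ only in the remaining $b-1$ coordinates.

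Next I would bound, for each $j\ne i$, the number of possible values of $d(v,w_j)$. By the triangle inequality applied along $w_j,w_i,v$, we have $|d(v,w_j)-d(w_i,w_j)|\le d(v,w_i)=k$, so $d(v,w_j)$ lies in an interval of length $2k$ around $d(w_i,w_j)$ and therefore takes at most $2k+1$ distinct integer values. Multiplying over the $b-1$ free coordinates, the number of distinct possible codes of vertices in $N_k(w_i)$ is at most $(2k+1)^{b-1}$.

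Finally, since $W$ resolves $G$ the map $v\mapsto r(v)$ is injective on $N_k(w_i)$, so $|N_k(w_i)|$ is bounded by the number of distinct codes, yielding $|N_k(w_i)|\le (2k+1)^{b-1}$. I do not foresee any real obstacle here: the argument is essentially bookkeeping, and the only point requiring care is to record that the bound is on the number of realizable code-tuples (after fixing the $i$-th coordinate to $k$) rather than on vertices directly, and then invoke injectivity of the code map at the end.
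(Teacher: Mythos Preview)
Your argument is correct and is precisely the standard metric-code counting proof. The paper itself does not prove this lemma but cites it from Hernando et al.\ \cite{Hernandoetal2010} (and the authors' earlier paper \cite{us}); your proof is the one found there.
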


In particular, if $G$ has metric dimension $2$, then no vertex of degree at least $4$ belongs to a metric basis of $G$.  This immediately gives the following.

\begin{corollary}\label{mindeg}
Let $G$ be a graph.  If $\delta(G)\geq 4$, then $\beta(G)\geq \tau(G)\geq 3$.
\end{corollary}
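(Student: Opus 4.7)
The inequality $\beta(G)\geq \tau(G)$ holds for every graph by the definition of threshold dimension (taking $H=G$ itself). So the real content is the lower bound $\tau(G)\geq 3$, and my plan is to derive it as a direct consequence of Lemma~\ref{2dim}, taken with the observation that adding edges only increases degrees.

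First I would argue by contradiction: suppose $\tau(G)\leq 2$, i.e., there exists a graph $H$ containing $G$ as a spanning subgraph with $\beta(H)\leq 2$. I would rule out $\beta(H)\in\{0,1\}$ quickly. If $\beta(H)=0$, then $H\cong K_1$ (by convention), which contradicts $\delta(G)\geq 4$. If $\beta(H)=1$, then $H$ is a path (a standard fact), in which case $\Delta(H)\leq 2$, again contradicting the fact that $H$ has minimum degree at least $4$.

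The main case is $\beta(H)=2$. Let $\{w_1,w_2\}$ be a basis for $H$. Applying Lemma~\ref{2dim} with $b=2$ and $k=1$ gives $|N_1^H(w_i)|\leq 2\cdot 1+1=3$, i.e., $\deg_H(w_i)\leq 3$ for $i\in\{1,2\}$. But since $H$ contains $G$ as a spanning subgraph, we have $\deg_H(w_i)\geq \deg_G(w_i)\geq \delta(G)\geq 4$, a contradiction. Hence no such $H$ exists and $\tau(G)\geq 3$.

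There is no real obstacle here; the whole argument is a one-line application of Lemma~\ref{2dim} combined with the monotonicity of degrees under edge addition. The only thing to be slightly careful about is the trivial cases $\beta(H)\in\{0,1\}$, which are immediate from our convention $\beta(K_1)=0$ and the characterization of dimension-$1$ graphs as paths.
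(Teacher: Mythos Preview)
Your proof is correct and follows essentially the same approach as the paper: apply Lemma~\ref{2dim} with $b=2$ and $k=1$ to see that every basis vertex of a graph of dimension $2$ has degree at most $3$, and combine this with the fact that degrees can only increase when passing to a spanning supergraph. Your explicit handling of the trivial cases $\beta(H)\in\{0,1\}$ is a bit more careful than the paper, which leaves this implicit.
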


For example, Corollary~\ref{mindeg} implies that $C_n^2$, the square of the cycle of order $n$, has threshold dimension at least $3$ for $n\geq 5$.  It was shown by Javaid et al.~\cite{Javaidetal2008} that $\beta(C_n^2)=3$ for $n\geq 6$ and $n\not\equiv 1\pmod{4}$, so we conclude that $\tau\left(C_n^2\right)=3$ for these values of $n$.\footnote{The graph $C_n^2$ is an example of a \emph{Harary graph}, and is denoted $H_{4,n}$ by Javaid et al.~\cite{Javaidetal2008}.  The graph $C_{2n}^2$ is also called an \emph{anti-prism graph}, and is denoted $A_n$ by Javaid et al.~\cite{Javaidetal2008}.}

Next, we establish that the threshold dimension of an infinite family of generalized Petersen graphs is $3$. This requires more work, as these graphs are $3$-regular, meaning that we cannot apply Corollary~\ref{mindeg}.  Let $P(n,k)$ denote the generalized Petersen graph with parameters $n$ and $k$, that is, the graph with vertex set $\{u_1, ..., u_n, v_1, ..., v_n\}$, and edge set $\{v_iv_{i+1}, u_iv_i, u_iu_{i+k}\colon\ 1 \leq i \leq k\}$, with indices taken modulo $n$. We call $\{u_1, ..., u_n\}$ the \emph{inner ring} of $P(n,k)$, and we call $\{v_1, ..., v_k\}$ the \emph{outer ring} of $P(n,k)$.  The graphs $P(6,2)$ and $P(8,2)$ are illustrated in Figure~\ref{Petersen}.

Javaid et al.~\cite{Javaidetal2008} showed that $\beta(P(n,2))=3$ for all $n\geq 5$.  Sudhakara et al.~\cite{Sudhakaraetal2009} demonstrated that no graph of dimension $2$ contains the Petersen graph as a subgraph, from which it follows that $P(5,2)$ is irreducible.  We now prove that $P(n,2)$ is in fact irreducible for all $n\geq 5$.

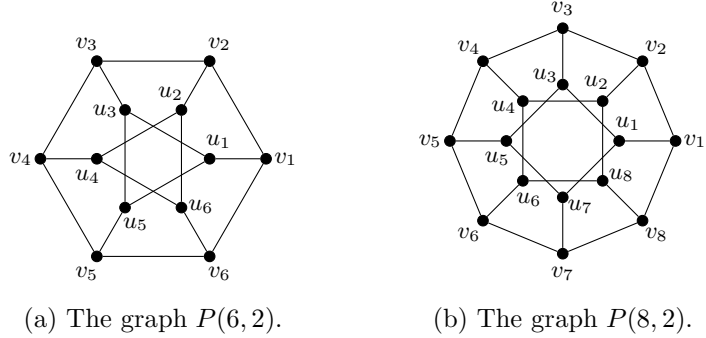
\begin{figure} 
	\centering
	\begin{subfigure}[t]{0.3\textwidth}
		\centering
		\begin{tikzpicture} [scale=0.75]
		\pgfmathtruncatemacro{\n}{6}
		\pgfmathtruncatemacro{\m}{\n-1}
		\pgfmathtruncatemacro{\a}{360/\n}

		\foreach \x in {0, ..., \m}
		{
			\pgfmathtruncatemacro{\p}{\x*\a}
			\vertex (x\x) at (\p:1) {};
			\vertex (y\x) at (\p:2) {};
		}
		
		\foreach \x in {0, 1, ..., \m}
		{
			
			\pgfmathtruncatemacro{\y}{mod(\x+1,\n)}
			\pgfmathtruncatemacro{\l}{mod(\x+2,\n)}
			
			\path
			(y\x) edge (y\y)
			(x\x) edge (y\x)
			(x\x) edge (x\l);
		}
		\node at (15:1.2) {\footnotesize $u_1$};
		\node at (75:1.2) {\footnotesize $u_2$};
		\node at (135:1.2) {\footnotesize $u_3$};
		\node at (195:1.2) {\footnotesize $u_4$};
		\node at (255:1.2) {\footnotesize $u_5$};
		\node at (315:1.2) {\footnotesize $u_6$};
		\node at (0:2.35) {\footnotesize $v_1$};
		\node at (60:2.35) {\footnotesize $v_2$};
		\node at (120:2.35) {\footnotesize $v_3$};
		\node at (180:2.35) {\footnotesize $v_4$};
		\node at (240:2.35) {\footnotesize $v_5$};
		\node at (300:2.35) {\footnotesize $v_6$};
		
		\end{tikzpicture}
		\caption{The graph $P(6,2)$.}
		\label{62}
	\end{subfigure}
	\hspace{1cm}
	\begin{subfigure}[t]{0.3\textwidth}
		\centering
		\begin{tikzpicture} [scale = 0.75]
		\pgfmathtruncatemacro{\n}{8}
		\pgfmathtruncatemacro{\m}{\n-1}
		\pgfmathtruncatemacro{\a}{360/\n}

		\foreach \x in {0, ..., \m}
		{
			\pgfmathtruncatemacro{\p}{\x*\a}
			\vertex (x\x) at (\p:1) {};
			\vertex (y\x) at (\p:2) {};
		}
		
		\foreach \x in {0, 1, ..., \m}
		{
			
			\pgfmathtruncatemacro{\y}{mod(\x+1,\n)}
			\pgfmathtruncatemacro{\l}{mod(\x+2,\n)}
			
			\path
			(y\x) edge (y\y)
			(x\x) edge (y\x)
			(x\x) edge (x\l);
		}
		
		\node at (15:1.2) {\footnotesize $u_1$};
		\node at (60:1.2) {\footnotesize $u_2$};
		\node at (105:1.2) {\footnotesize $u_3$};
		\node at (150:1.2) {\footnotesize $u_4$};
		\node at (195:1.2) {\footnotesize $u_5$};
		\node at (240:1.2) {\footnotesize $u_6$};
		\node at (285:1.2) {\footnotesize $u_7$};
		\node at (330:1.2) {\footnotesize $u_8$};
		\node at (0:2.35) {\footnotesize $v_1$};
		\node at (45:2.35) {\footnotesize $v_2$};
		\node at (90:2.35) {\footnotesize $v_3$};
		\node at (135:2.35) {\footnotesize $v_4$};
		\node at (180:2.35) {\footnotesize $v_5$};
		\node at (225:2.35) {\footnotesize $v_6$};
		\node at (270:2.35) {\footnotesize $v_7$};
		\node at (315:2.35) {\footnotesize $v_8$};

		\end{tikzpicture}
		\caption{The graph $P(8,2)$.}
		\label{82}
	\end{subfigure}
	
	\caption{The generalized Petersen graphs $P(6,2)$ and $P(8,2)$.}
	\label{Petersen}
\end{figure}

\begin{theorem}
If $n \geq 5$, then $P(n,2)$ is irreducible.
\end{theorem}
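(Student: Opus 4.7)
The plan is to argue by contradiction. Assume some spanning supergraph $H$ of $P(n,2)$ has $\beta(H) = 2$, with resolving set $W = \{w_1, w_2\}$. Applying Lemma~\ref{2dim} with $b = 2$ and $k = 1$ gives $\deg_H(w_i) \leq 3$; since $P(n,2)$ is $3$-regular and $H$ contains it as a spanning subgraph, this forces $N^H(w_i) = N^{P(n,2)}(w_i)$, so no added edge of $H$ is incident with $w_1$ or $w_2$.

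Taking $k = 2$ in the same lemma gives $|N_2^H(w_i)| \leq 5$. For any vertex $x$ with $d_{P(n,2)}(w_i, x) = 2$, we have $1 \leq d_H(w_i, x) \leq 2$, and by the previous step $x$ cannot be a neighbour of $w_i$ in $H$; hence $x \in N_2^H(w_i)$, which yields the key inequality $|N_2^{P(n,2)}(w_i)| \leq 5$. I would then verify by direct computation in $P(n,2)$ that every outer-ring vertex $v_j$ has $|N_2^{P(n,2)}(v_j)| = 6$ for all $n \geq 5$, and that every inner-ring vertex $u_j$ has $|N_2^{P(n,2)}(u_j)| = 6$ except when $n \in \{6, 8\}$, where $|N_2^{P(n,2)}(u_j)|$ drops to $4$ and $5$ respectively (because two length-two walks from $u_j$ through the inner cycle collide in those two cases). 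This immediately contradicts the existence of $W$ whenever $n \in \{5, 7\}$ or $n \geq 9$.

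For the exceptional cases $n = 6$ and $n = 8$, both $w_1$ and $w_2$ must lie on the inner ring. Using the dihedral symmetry of $P(n,2)$, I would fix $w_1 = u_1$ and check each remaining choice of $w_2$ up to symmetry. In each configuration, the plan is to exhibit two vertices whose $H$-codes $(d_H(\cdot, w_1), d_H(\cdot, w_2))$ must coincide. Depending on the case, the collision arises either because the three neighbours of $w_1$ are forced into too small a range of possible $d_H(\cdot, w_2)$ values (when $w_1, w_2$ are far apart in $P(n,2)$, so by pigeonhole two neighbours receive the same second coordinate), or because common second-neighbours of $w_1$ and $w_2$ must all receive code $(2, 2)$ by the inclusion $N_2^{P(n,2)}(w_i) \subseteq N_2^H(w_i)$ established above.

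The main obstacle is that this last step cannot be handled by a single uniform inequality: Lemma~\ref{2dim} alone does not exclude $n = 6, 8$, and the contradiction must be extracted from a genuine case analysis. The saving grace is that only finitely many choices of $w_2$ arise once $w_1$ is fixed, and each case is controlled by the rigidity imposed by the first two steps: the neighbourhoods of $w_1, w_2$ in $H$ are frozen, their second-neighbourhoods are essentially forced to be the $P(n,2)$-second-neighbourhoods, and the allowed distance values to $w_2$ along any short path from $w_1$ are tightly constrained by the ball bounds $|N_k^H(w_i)| \leq 2k+1$.
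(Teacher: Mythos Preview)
Your proposal is correct and follows essentially the same route as the paper's proof: use Lemma~\ref{2dim} with $k=1$ and $k=2$ to bound the first and second neighbourhoods of basis vertices, compute $|N_2^{P(n,2)}(\cdot)|$ to rule out all $n\notin\{6,8\}$, and finish the two exceptional cases by a finite check of inner-ring basis pairs for forced code collisions. Your explicit freezing of $N^H(w_i)=N^{P(n,2)}(w_i)$ before passing to second neighbourhoods, and your backup pigeonhole argument on the three neighbours of $w_1$, are in fact useful refinements---the latter is genuinely needed for the pair $\{u_1,u_3\}$ when $n=8$, where $|N_2^G(u_1)\cap N_2^G(u_3)|=1$ and the collision instead occurs at code $(1,2)$ between $u_7$ and $v_1$.
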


\begin{proof}
Let $n \geq 5$, and let $G=P(n,2)$.  Since it is known that $\beta(P(n,2))=3$~\cite{Javaidetal2008}, it suffices to show that $\tau(G)\geq 3$.  Suppose, to the contrary, that there exists a set of edges $\mathcal{E}\subseteq E\left(\overline{G}\right)$ such that $\beta(G+\mathcal{E})=2$.  Let $\{x,y\}$ be a basis for $G+\mathcal{E}$.  We consider several cases, using the fact that $G$ is $3$-regular throughout.
	
\noindent \textbf{Case 1:} $n \geq 10$, or $n\geq 5$ and $n$ is odd.
	
\noindent In this case, for each vertex $v \in V(G)$, we have $|N_2^G(v)|=6$. If no edge of $\mathcal{E}$ is incident with $x$, then  $N_2^{G+\mathcal{E}}(x) \geq 6$. By Lemma~\ref{2dim}, this is not possible. Hence, there is an edge in $\mathcal{E}$ incident with $x$, and some vertex of $G$ in $N_2^G(x)$. However, then $N_1^{G+\mathcal{E}}(x)\geq 4$.  Again, by Lemma~\ref{2dim}, this is impossible. We conclude that $\tau(G)=3$.

\smallskip
	
\noindent	\textbf{Case 2:} $n=6$.
	
\noindent The graph $G$ is depicted in Figure \ref{62}. For each vertex $v$ on the outer ring, we have $|N_2^G(v)|=6$. Hence, by the same argument as in Case 1, we see that $x$ and $y$ must belong to the inner ring of $G$. Without loss of generality, we may assume that $x=u_1$, and $y=u_i$ for some $i\in\{2,3,4\}$.  In each case, it is straightforward to see that $N_2^G(x)\cap N_2^G(y)$ contains some vertex $v$.  Since $\{x,y\}$ resolves $G+\mathcal{E}$, we see that $\mathcal{E}$ must contain one of the edges $xv$ or $yv$.  But then either $N_1^{G+\mathcal{E}}(x)\geq 4$, or $N_1^{G+\mathcal{E}}(y)\geq 4$, and this is impossible by Lemma~\ref{2dim}.

\smallskip
	
\noindent	\textbf{Case 3:} $n=8$.
	
\noindent The graph $G$ is shown in Figure \ref{82}. As in Case $2$, we see that $x$ and $y$ must belong to the inner ring of $G$.  Without loss of generality, we may assume that $x=u_1$ and $y=u_i$ for some $i\in\{2,3,4,5\}$.  In each case, one can show that $N_2^G(x)\cap N_2^G(y)$ is nonempty, and we reach a contradiction as in Case 2.
\end{proof}

\subsection{Irreducible graphs of given order and dimension} \label{prop}

In this subsection, we prove that an irreducible graph of any given dimension and any order exceeding this dimension exists. We begin with some theorems that will be helpful in constructing irreducible graphs from other irreducible graphs. Graphs of diameter $2$ and the join operation will play an important role in our constructions. Recall that the join of graphs $G$ and $H$ is denoted $G \vee H$.


\begin{lemma} \label{nobar}
	If $\mathrm{diam}(G)\leq2$, then
	\begin{enumerate}
	\item $\beta\left(G \vee \overline{K_2}\right) = \beta(G)+1$; and
	\item $\beta(G \vee K_2)\in \{\beta(G)+1,\beta(G)+2\}$
\end{enumerate}
\end{lemma}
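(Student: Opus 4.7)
The plan is to exploit two structural observations. First, in either of $G \vee \overline{K_2}$ or $G \vee K_2$, the two added vertices $u$ and $v$ are \emph{twins} in the sense that both are adjacent to every vertex of $V(G)$, so every vertex in $V(G)$ is at distance exactly $1$ from each of $u$ and $v$. Consequently, no vertex other than $u$ or $v$ can distinguish the pair $\{u,v\}$, so any resolving set must contain at least one of them. Second, because $\mathrm{diam}(G) \leq 2$, the distance between any two vertices of $V(G)$ is the same in $G$ as in $G \vee H$ for $H \in \{K_2, \overline{K_2}\}$; the new vertices provide alternate paths of length $2$, but cannot strictly shorten any distance, which was already at most $2$ in $G$.

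For the lower bounds in both (i) and (ii), I would begin with a resolving set $W$ and assume without loss of generality that $u \in W$. Since every vertex of $V(G)$ is equidistant from both $u$ and $v$, neither of these can resolve any pair $x, y \in V(G) \setminus W$. The distance-preservation observation then implies that $W \cap V(G)$ must resolve every such pair using distances in $G$ itself, so $W \cap V(G)$ is a resolving set for $G$, whence $|W| \geq \beta(G) + 1$.

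For the upper bound in (i), I would take a basis $B$ of $G$ and argue that $B \cup \{u\}$ resolves $G \vee \overline{K_2}$; the key point is that $d(u, v) = 2$ while $d(u, x) = 1$ for every $x \in V(G)$, so $u$ already distinguishes $v$ from all of $V(G)$, and $B$ resolves pairs within $V(G)$ by the distance-preservation observation. For the upper bound in (ii), I would instead take $W = B \cup \{u, v\}$; the presence of both $u$ and $v$ in the resolving set distinguishes them from one another and from every $x \in V(G)$ by a routine comparison of the last two coordinates of the distance codes, and $B$ resolves the remaining pairs as before.

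The main conceptual step, rather than a serious obstacle, is the distance-preservation observation tying the diameter hypothesis to the reduction from $G \vee H$ back to $G$; once this is in place, both parts reduce to routine code comparisons. The reason the value in (ii) is an interval rather than an equality is that $B \cup \{u\}$ may or may not resolve $G \vee K_2$ depending on whether some vertex of $V(G) \setminus B$ is adjacent in $G$ to every vertex of $B$ (in which case its code would collide with that of $v$); either outcome is possible, and the lemma merely records both endpoints of the interval without claiming which one is attained.
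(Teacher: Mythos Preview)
Your argument is correct and follows essentially the same approach as the paper: both proofs use distance preservation (from $\mathrm{diam}(G)\le 2$) to show that $W\cap V(G)$ must resolve $G$, together with the twin observation to force one of the two added vertices into $W$, and then exhibit $B\cup\{u\}$ (resp.\ $B\cup\{u,v\}$) for the upper bounds. Your treatment is in fact slightly more explicit than the paper's, which handles only (i) in detail and dismisses (ii) as similar; your remark on exactly when $B\cup\{u\}$ suffices in (ii) is an accurate extra observation.
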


\begin{proof}
	We prove only (i); the proof of (ii) is similar.  Let $H=G \vee \overline{K_2}$. Let $W$ be a basis for $H$.
	Since $\mathrm{diam}(G)=2$, we have $d_H(u,v)= d_G(u,v)$ for all vertices $u,v\in V(G)$. Moreover, neither of the two vertices in $\overline{K_2}$ resolves any pair of vertices that belong to $G$.
	So, $W \cap V(G)$ resolves $G$, and hence $|W \cap V(G)| \ge \beta(G)$. Moreover, $W$ contains at least one of the two vertices joined to every vertex of $G$; otherwise this pair is not resolved by $W$. Thus $|W| \ge \beta(G)+1$.
Now, if $v \in V\left(\overline{K_2}\right)$, and $W_G$ is a basis for $G$, then $W_G \cup \{v\}$ is a resolving set for $H$, since $W_G$ resolves each pair of vertices in $V(G)$, and $v$ resolves any pair of vertices containing at least one vertex from $\overline{K_2}$. So $\beta(H) = \beta(G) +1$.
\end{proof}

If $G$ is an arbitrary graph of diameter at most $2$, note that both $\beta(G\vee K_2)=\beta(G)+1$ and $\beta(G\vee K_2)=\beta(G)+2$ are possible.  For example, it is straightforward to verify that $\beta(K_n\vee K_2)=\beta(K_n)+2$ for all $n\geq 1$, while $\beta(C_4\vee K_2)=\beta(C_4)+1$.  If $G$ has diameter greater than $2$, then it is possible that $\beta(G\vee \overline{K_2})$ and $\beta(G\vee K_2)$ are both larger than $\beta(G)+2$.  For example, let $T$ be the tree obtained from $K_{1,3}$ by subdividing every edge exactly twice.  Then one can verify that $\beta(T)=2$, while $\beta(T\vee \overline{K_2})=\beta(T\vee K_2)=5$.  See Figure~\ref{SubdividedK13Basis} for an illustration of a metric basis of $T\vee\overline{K_2}$.  (The white vertices also form a metric basis of $T\vee K_2$.)

Next, we show that when we join $\overline{K_2}$ to an irreducible graph of diameter $2$, the resulting graph is also irreducible.  This is an important tool in the proof of the main result of this section.

\begin{figure}
	\centering
	\begin{subfigure}[t]{0.45\textwidth}
		\centering
		\begin{tikzpicture} [scale=0.6]
		
		\hollowvertex (0) at (0,0) {};
		
		\foreach \x in {0,1,2}
		{
			\pgfmathtruncatemacro{\p}{\x*120}
			\vertex (1\x) at (30+\p:1) {};
			\hollowvertex (2\x) at (30+\p:2) {};
			\vertex (3\x) at (30+\p:3) {};
			\path 
			(0) edge (1\x)
			(1\x) edge (2\x)
			(2\x) edge (3\x); 
		}
		\hollowvertex (a) at (-30:3) {};
		\vertex (b) at (210:3) {};
		\begin{scope}[shift={(-30:3)}]
		\foreach \x in {90,102,114,126,138,150,162,174,186,198,210}
		{
			\draw (a) -- (\x:1);
		}		
		\end{scope}
		\begin{scope}[shift={(210:3)}]
		\foreach \x in {90,102,114,126,138,150,162,174,186,198,210}
		{
			\draw (b) -- (-120+\x:1);
		}		
		\end{scope}
		
		\end{tikzpicture}
		\caption{The graph $T\vee\overline{K_2}$, with the vertices of a metric basis coloured white.}
		\label{SubdividedK13Basis}
	\end{subfigure}
	\ \ 
	\begin{subfigure}[t]{0.45\textwidth}
		\centering
		\begin{tikzpicture} [scale=0.6]
		
		\vertex (0) at (0,0) {};
		
		\foreach \x in {0,1,2}
		{
			\pgfmathtruncatemacro{\p}{\x*120}
			\vertex (1\x) at (30+\p:1) {};
			\vertex (2\x) at (30+\p:2) {};
			\hollowvertex (3\x) at (30+\p:3) {};
			\path 
			(0) edge (1\x)
			(1\x) edge (2\x)
			(2\x) edge (3\x); 
		}
		\path[dashed]
		(30) edge[bend right =40] (11)
		(30) edge[bend right =40] (10)
		(31) edge[bend right =40] (12)
		(31) edge[bend right =40] (11)
		(32) edge[bend right =40] (10)
		(32) edge[bend right =40] (12);
		
		\hollowvertex (a) at (-30:3) {};
		\vertex (b) at (210:3) {};
		\begin{scope}[shift={(-30:3)}]
		\foreach \x in {90,102,114,126,138,150,162,174,186,198,210}
		{
			\draw (a) -- (\x:1);
		}		
		\end{scope}
		\begin{scope}[shift={(210:3)}]
		\foreach \x in {90,102,114,126,138,150,162,174,186,198,210}
		{
			\draw (b) -- (-120+\x:1);
		}		
		\end{scope}
		
		\end{tikzpicture}
		\caption{A threshold graph for $T\vee\overline{K_2}$, with the vertices of a metric basis coloured white.}
		\label{SubdividedK13Threshold}
	\end{subfigure}
	
	\caption{The graph $T\vee \overline{K_2}$, where $T$ is the tree obtained from $K_{1,3}$ by subdividing every edge exactly twice.  (For clarity, only one end of each edge joining a vertex of $T$ to a vertex of $\overline{K_2}$ is drawn.)  Note that $\beta(T)=\tau(T)=2$, while $\beta(T\vee \overline{K_2})=5$ and $\tau(T\vee\overline{K_2})=4$.}
	\label{SubdividedK13}
\end{figure}
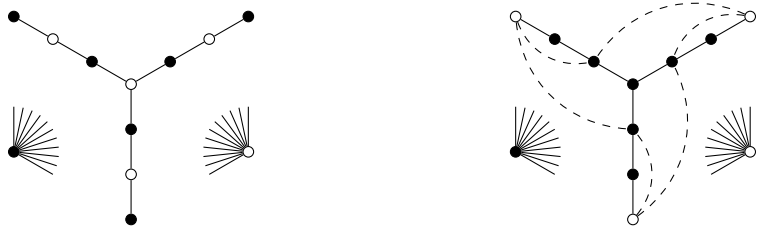

\begin{theorem} \label{k2}
Let $G$ be an irreducible graph, with $\mathrm{diam}(G)\leq 2$. Then $G \vee \overline{K_2}$ is also irreducible, with $\tau\left(G\vee \overline{K_2}\right) = \beta(G)+1$.
\end{theorem}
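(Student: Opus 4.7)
The plan is to prove $\tau(G \vee \overline{K_2}) \geq \beta(G)+1$, since the reverse inequality is immediate from Lemma~\ref{nobar}(i) (any basis of $G \vee \overline{K_2}$ shows $\tau \le \beta = \beta(G)+1$). Label the two vertices of $\overline{K_2}$ as $a$ and $b$, and let $H = (G \vee \overline{K_2}) + \mathcal{E}$ for some set $\mathcal{E}$ of non-edges. The only non-edges of $G \vee \overline{K_2}$ are non-edges of $G$ (sitting inside $V(G)$) and the single edge $ab$, so $\mathcal{E} \subseteq E(\overline{G}) \cup \{ab\}$. Let $G' = G + (\mathcal{E} \cap E(\overline{G}))$, which contains $G$ as a spanning subgraph.

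The first key step is to show that distances inside $V(G)$ are preserved: for distinct $u,v \in V(G)$, $d_H(u,v) = d_{G'}(u,v)$. Since $G'$ is a spanning subgraph of the subgraph of $H$ induced on $V(G)$, we get $d_H(u,v) \leq d_{G'}(u,v) \leq 2$ (using $\mathrm{diam}(G) \le 2$). Conversely, $uv \in E(H)$ forces $uv \in E(G) \cup \mathcal{E}$, hence $uv \in E(G')$, so equality holds in each of the two possible values $1$ and $2$.

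The second key step is to identify which vertices can resolve which pairs. Every vertex of $V(G)$ is adjacent in $H$ to both $a$ and $b$, so $d_H(v,a) = d_H(v,b) = 1$ for every $v \in V(G)$. This has two consequences: (1) the pair $\{a,b\}$ is resolved only by $a$ or $b$ themselves, so any resolving set $W$ for $H$ must contain $a$ or $b$; and (2) for any pair $u,v \in V(G)$, neither $a$ nor $b$ resolves $\{u,v\}$ in $H$, so $\{u,v\}$ must be resolved by a vertex of $W \cap V(G)$. By the first step, this means $W \cap V(G)$ is a resolving set for $G'$.

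Putting these pieces together, any resolving set $W$ for $H$ satisfies
\[
|W| \geq |W \cap V(G)| + 1 \geq \beta(G') + 1 \geq \tau(G) + 1 = \beta(G) + 1,
\]
where the final equality uses that $G$ is irreducible. I do not anticipate any serious obstacle here: the diameter-$2$ hypothesis is exactly what keeps the added edges $\mathcal{E}$ from shortening distances inside $V(G)$ below what $G'$ already records, and the universal adjacency of $a$ and $b$ to $V(G)$ makes the resolving-pair analysis a short case check.
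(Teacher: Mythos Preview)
Your argument is correct and is essentially the same as the paper's: both show that any resolving set of a supergraph of $G\vee\overline{K_2}$ must contain a vertex of $\overline{K_2}$ and that its restriction to $V(G)$ resolves $G'=G+(\mathcal{E}\cap E(\overline{G}))$, yielding $|W|\ge \beta(G')+1\ge \beta(G)+1$ by irreducibility of $G$. The only cosmetic difference is that the paper first invokes Lemma~\ref{nobar}(ii) to discard the edge $ab$ and then argues by contradiction, whereas you handle the possible edge $ab$ uniformly inside the direct inequality; your packaging is slightly cleaner but the substance is identical.
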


\begin{proof} 		
Let $H=G\vee\overline{K_2}$. By Lemma~\ref{nobar}, we have that $\beta(H) = \beta(G)+1$.
	
	We now argue that $H$ is irreducible. Suppose towards a contradiction that there is a set $\{e_1, \ldots, e_k\}=\mathcal{E}$ of edges such that $\beta(H + \mathcal{E}) < \beta(H)$. By the left inequality of Lemma~\ref{nobar}, we may assume that all of the edges in $\mathcal{E}$ join vertices of $G$. We claim that $\beta(G+ \mathcal{E}) < \beta(G)$. Let $H' = H + \mathcal{E}$, and $G'=G + \mathcal{E}$. Let $W$ be a basis for $H'$. From the proof of Lemma~\ref{nobar}, we know that exactly one vertex from $V\left(\overline{K_2}\right)$, say $v$, is in $W$. Also, note that $H'=G'\vee \overline{K_2}$. Let $W'=W-\{v\}$. Then $W'$ must resolve $G'$, since for all $u,v \in V(G)$ we have $d_{H'}(u,v) = d_{G'}(u,v)$. So $\beta(G') \leq |W'| = \beta(H') - 1 < \beta(H) - 1 = \beta(G)$. But then $G$ is reducible, a contradiction.
\end{proof}

While there are some irreducible graphs of diameter greater than $2$ for which the conclusion of Theorem~\ref{k2} holds (e.g., every graph obtained from $K_{1,3}$ by subdividing every edge at most once), the conclusion of Theorem~\ref{k2} need not hold in general for graphs of diameter greater than $2$.  For example, the tree $T$ obtained by subdividing every edge of $K_{1,3}$ exactly twice is an irreducible graph of dimension $2$, but one can verify that $\beta(T\vee\overline{K_2})=5$ and $\tau(T\vee\overline{K_2})=4$ (see Figure~\ref{SubdividedK13}).

We next describe two infinite families of irreducible graphs that will be used to establish the main result of this section. For the first of these, we use the function $g$ defined in Section~\ref{preliminaries}. For every $n\geq 2$, let $A_n=K_{g(n)}$ and let $B_n=K_{n-g(n)}$. Apply the Reverse Shortlex Assignment Algorithm to the disjoint union $A_n\cup B_n$ with $W=V(A_n)$ and $P=V(B_n)$.  Let $\mathcal{E}$ be the edges output by the algorithm.  Define $S_n:=(A_n\cup B_n)+\mathcal{E}$.  Note that $S_n$ has order $n$, and that the vertices of $A_n$ form a basis for $S_n$.  Hence $S_n$ has order $n$ and dimension $g(n)$. Since some vertex of $A_n$ is assigned the entire set $V(B_n)$ by the Reverse Shortlex Assignment Algorithm, this vertex is universal in $S_n$. We conclude that $S_n$ has diameter at most $2$.  By Lemma~\ref{weakbound}, we conclude that $S_n$ is irreducible.  Figure~\ref{ring3} shows the graph $S_8$, with the vertices of $A_8$ coloured black.

%

For integers $b>1$ and $s\geq 1$, let $F_{b,s}$ be the graph obtained from the disjoint union $K_{2^b} \cup \overline{K_b} \cup P_s$ by joining a leaf of the path $P_s$ to a single vertex of $\overline{K_b}$.  Apply the Shortlex Assignment Algorithm to $F_{b,s}$ with $W=V\left(\overline{K_b}\right)$ and $P=V(K_{2^b})$, and let the output be $\mathcal{E}$.  Define $S_{b,s}:=F_{b,s}+\mathcal{E}$.  Note that $S_{b,s}$ has order $2^b+b+s$, and that the set $V\left(\overline{K_b}\right)$ resolves $S_{b,s}$.  By Lemma~\ref{kn}, $S_{b,s}$ is irreducible.   The graph $S_{2,3}$ is shown in Figure \ref{po3}, with the vertices of the resolving set $\overline{K_2}$ coloured black.

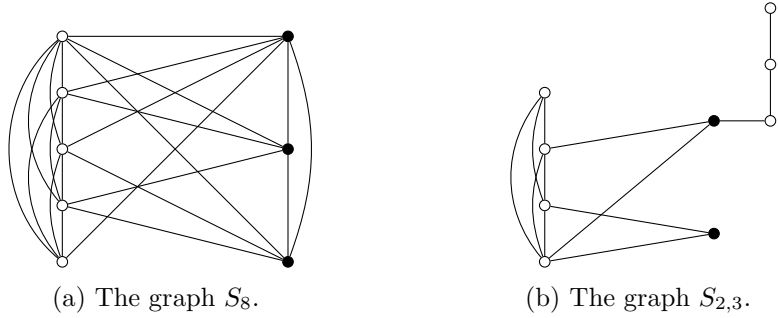
\begin{figure} \label{ringgraph} 
	\centering
	\begin{subfigure}[t]{0.45\textwidth}
		\centering		
		\begin{tikzpicture}[scale=0.75]
		\hollowvertex (x5) at (0,3) {};
		\hollowvertex (x4) at (0,4) {};
		\hollowvertex (x3) at (0,5) {};
		\hollowvertex (x2) at (0,6) {};
		\hollowvertex (x1) at (0,7) {};
		\vertex (v3) at (4,3) {};
		\vertex (v2) at (4,5) {};
		\vertex (v1) at (4,7) {};
		
		\path
		(x1) edge (x2)
		(x2) edge (x3)
		(x3) edge (x4)
		(x4) edge (x5)
		(x1) edge[bend right=20] (x3)
		(x2) edge[bend right=20] (x4)
		(x3) edge[bend right=20] (x5)
		(x1) edge[bend right=40] (x4)
		(x2) edge[bend right=40] (x5)
		(x1) edge[bend right=50] (x5)
		(v1) edge (v2)
		(v2) edge (v3)
		(v1) edge[bend left=20] (v3);

		\path
		(x1) edge (v1)
		(x1) edge (v2)
		(x1) edge (v3)
		(x2) edge (v1)
		(x2) edge (v2)
		(x3) edge (v1)
		(x3) edge (v3)
		(x4) edge (v2)
		(x4) edge (v3)
		(x5) edge (v1)		
		;

		\end{tikzpicture}
		\caption{The graph $S_{8}$.}
		\label{ring3}
	\end{subfigure}
	\begin{subfigure}[t]{0.45\textwidth}
		\centering
		\begin{tikzpicture}[scale = 0.75]
		\hollowvertex (x4) at (0,4) {};
		\hollowvertex (x3) at (0,5) {};
		\hollowvertex (x2) at (0,6) {};
		\hollowvertex (x1) at (0,7) {};
		\vertex (v2) at (3,4.5) {};
		\vertex (v1) at (3,6.5) {};
		\hollowvertex (y1) at (4, 6.5) {};
		\hollowvertex (y2) at (4, 7.5) {};
		\hollowvertex (y3) at (4, 8.5) {};
		
		\path
		(x1) edge (x2)
		(x2) edge (x3)
		(x3) edge (x4)
		(x1) edge[bend right=20] (x3)
		(x2) edge[bend right=20] (x4)
		(x1) edge[bend right=40] (x4);
		
		\path
		(x2) edge (v1)
		(x3) edge (v2)
		(x4) edge (v1)
		(x4) edge (v2)
		(v1) edge (y1)
		(y1) edge (y2)
		(y2) edge (y3)
		;
		\end{tikzpicture}
		\caption{The graph $S_{2,3}$.}
		\label{po3}
	\end{subfigure}
	\caption{The graphs $S_8$ and $S_{2,3}$.  The vertices of a metric basis for each graph are coloured black.}
\end{figure}

%
%
%
%
%
%
%



For graphs $G_1,G_2,\ldots,G_k$, let $\bigvee_{i=1}^k G_i$ be the join of the graphs $G_1,G_2,\ldots, G_k$, i.e., we have $\bigvee_{i=1}^k G_i=G_1\vee G_2\vee\cdots\vee G_k$.

\begin{theorem}
For every integer $n\geq 2$, and every integer $b\in\{1,\dots,n-1\}$, there exists a connected irreducible graph of order $n$ and dimension $b$.
\end{theorem}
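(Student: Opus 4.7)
My plan is to construct, for every admissible pair $(n,b)$, an explicit connected irreducible graph of order $n$ and dimension $b$, by a case analysis based on $n$ relative to $b$. The case $b=1$ will be handled by the path $P_n$, which has dimension $1$ and is trivially irreducible. The case $b\ge 2$ and $n\ge 2^b+b+1$ will be handled by the family $S_{b,s}$ with $s=n-2^b-b$, which has already been constructed in this section and shown to be an irreducible graph of order $n$ and dimension $b$.

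All remaining pairs lie in the range $b\ge 2$ and $b+1\le n\le 2^b+b$, and for these the construction will proceed by iteratively taking the join with $\overline{K_2}$ and invoking Theorem~\ref{k2}. The key observation is that joining with $\overline{K_2}$ preserves the property of having diameter at most $2$, so starting from an irreducible graph $G_0$ of diameter at most $2$, $k\ge 0$ successive applications of Theorem~\ref{k2} will produce an irreducible graph of order $|V(G_0)|+2k$ and dimension $\beta(G_0)+k$. It then suffices to exhibit an appropriate starting graph $G_0$ for each $(n,b)$ in this range.

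For $b+1\le n\le 2b+1$, I would take $G_0=K_{d+1}$ with $d=2b-n+1\in\{0,1,\dots,b\}$ (using $K_1$ when $d=0$); this graph is irreducible, has dimension $d$, and has diameter at most $1$, and $k=n-b-1$ applications of Theorem~\ref{k2} will yield the desired graph. For $2b+2\le n\le 2^b+b$, I would take $G_0=S_m$ with $m=n-2b+2d$, where $d\in\{1,\dots,b\}$ is chosen so that $g(m)=d$, followed by $k=b-d$ applications of Theorem~\ref{k2}. The main obstacle, and the only nonroutine step, will be to verify that such a $d$ always exists in the stated range of $n$: this reduces to checking that the intervals $J_d=(2b-d+2^{d-1}-1,\,2b-d+2^d]$ for $d\in\{1,\dots,b\}$ together cover every integer in $[2b,2^b+b]$, which follows from the observation that $\max J_d=2b-d+2^d$ lies in $J_{d+1}$, so consecutive intervals overlap. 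Once $d$ is secured, the construction and its irreducibility follow immediately from Theorem~\ref{k2}.
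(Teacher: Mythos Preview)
Your proposal is correct and follows essentially the same strategy as the paper's proof: the case $b=1$ via $P_n$, the case $n>2^b+b$ via $S_{b,s}$, and the remaining range via a base graph of diameter at most $2$ (either a complete graph or one of the $S_m$) to which Theorem~\ref{k2} is applied iteratively. The only cosmetic differences are in the case boundaries (you place $n=2b+1$ in the complete-graph sub-case using $G_0=K_1$, whereas the paper places it in the $S_m$ sub-case) and in the parametrization of the middle range: the paper parametrizes by the number of joins $k$ and argues existence of a suitable $k$ by a short contradiction argument, while you parametrize by the base dimension $d=b-k$ and verify existence directly via the overlapping intervals $J_d$; these are equivalent reformulations of the same computation.
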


\begin{proof}
	If $b=1$, then $P_n$ is a connected irreducible graph of order $n$ and dimension $b$.  From now on, assume that $b > 1$. We consider three different cases.

\smallskip
	
\noindent \textbf{Case 1:} $n>2^b+b$.
	
\noindent Let $s=n-2^b-b$.  Then $S_{b,s}$ is a connected irreducible graph of dimension $b$ and order $n$.
	
\smallskip
	
\noindent \textbf{Case 2:} $2b<n\leq 2^b+b$.

\noindent Let $k$ be the smallest non-negative integer such that 
\[
2^{b-k-1}+b-k \leq n-2k \leq 2^{b-k}+b-k.
\] 
To see that such a $k$ always exists observe first that for $k=b-1$ and $n>2b$ we have $2^{b-k-1} + b-k = 2 < n-2b+2$. So the lower bound holds for some $k$.  Moreover, the upper bound holds for $k=0$. It suffices now to show that there is a $k$ for which both the upper and lower bounds hold. If the upper bound holds for all $k$, then the result follows. Assume now that $(k=)\ell$ is the smallest non-negative integer for which the upper bound fails. Then $n-2\ell > 2^{b-\ell}+b-\ell$. From the above observation, we have $\ell>0$. Thus $n-2(\ell-1) = n-2\ell+2 > 2^{b-\ell}+b-\ell+2=2^{b-(\ell-1)-1}+b-(\ell-1)+1$. Hence for $k=\ell-1$, the lower bound holds. Moreover, the upper bound holds, by assumption, if $k=\ell-1$. So our assertion now follows.
	
	Let $G=S_{n-2k} \vee \left(\bigvee_{i=1}^k\overline{K_2}\right)$. Note first that $G$ has order $n$.  Further, since $2^{b-k-1}+b-k \leq n-2k \leq 2^{b-k}+b-k$ we have $g(n-2k)=b-k$. Hence, we have $\beta(S_{n-2k})=b-k$. By repeated application of Theorem~\ref{k2}, we have $\tau(G)=\beta(G)=\beta(S_{n-2k})+k=b$. Thus, we have shown that $G$ is an irreducible graph with dimension $b$ and order $n$.
	
	\smallskip
	
	\noindent \textbf{Case 3:} $n\leq 2b$.
	
	Let $k=n-1-b$.  Since $b\leq n-1$, we must have $k\geq 0$.  Since $n\leq 2b$, we also have $k\leq n-1-n/2\leq n/2 -1$. Let $G= K_{n-2k}\vee \left(\bigvee_{i=1}^k\overline{K_2}\right)$. Note that $G$ can also be obtained from the complete graph $K_n$ by deleting a matching of size $k$.  By repeated application of Theorem~\ref{k2}, we conclude that $G$ is an irreducible graph of order $n$ and dimension $\beta(K_{n-2k})+k=n-2k-1+k=b$.
\end{proof}

\section{Conclusion}
In this article we gave upper bounds on the threshold dimension of a graph of order $n$, the first in terms of its diameter, and the second in terms of its chromatic number.  The latter bound implies that the threshold dimension of every planar graph of order $n$ is less than $4\log_2n$. We also proved that several infinite families of graphs with constant metric dimension are irreducible. Finally, we showed that there exists an irreducible graph of order $n$ and dimension $b$, for all $n > b \geq 1$.

We posed some questions concerning the computational complexity of the threshold dimension in an earlier paper~\cite{us}.  We add one more such question here.

\begin{question}
	Can the threshold dimension of every cograph be computed in polynomial time?
\end{question}

The problem of characterizing irreducible graphs appears to be quite difficult. Many well-known families of graphs have been characterized in terms of forbidden induced subgraphs (see \cite{BrandstadtLeSpinrad1999}, for example). If $F$ is a graph, then $G$ is $F$-{\em free} if $G$ does not contain $F$ as an induced subgraph. If $\mathcal{F}$ is a family of graphs, then a graph $G$ is \emph{$\mathcal{F}$-free} if $G$ is $F$-free for all $F \in \mathcal{F}$. For example, cographs are exactly the $P_4$-free graphs, and perfect graphs are precisely the $\left\{C_{2k+1}, \overline{C_{2k+1}}\colon\ k\geq 2\right\}$-free graphs. We now observe that the irreducible graphs do not have a forbidden subgraph characterization.

\begin{theorem}
	Let $G$ be a graph. Then there exists an irreducible graph $H$ with $G$ as an induced subgraph.
\end{theorem}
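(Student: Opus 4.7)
The plan is to construct, for any given graph $G$ of order $n$, an irreducible graph $H$ of diameter~$2$ that contains $G$ as an induced subgraph, in the spirit of the graphs $S_n$ and $S_{b,s}$ built in Subsection~\ref{prop}. First, choose an integer $b$ with $2^{b-1} \geq n+1$. Introduce a set $W$ of $b$ new vertices (intended to play the role of a metric basis) and a set $C$ of $2^{b-1}+1$ new vertices (intended to play the role of a clique forcing $\tau(H) \ge b$ from below via Lemma~\ref{kn}). Define $H$ on vertex set $V(G) \cup W \cup C$ by keeping all edges of $G$, making $W$ independent, making $C$ a clique, singling out a vertex $c_0 \in C$ and joining it to every other vertex of $H$, and finally assigning to each remaining non-$W$ vertex a distinct proper subset of $W$ as its $W$-neighbourhood, via additional edges between $V(G) \cup (C\setminus\{c_0\})$ and $W$. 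This last assignment is possible because the number of proper subsets of $W$ is $2^b - 1$, while the number of non-$W$, non-$c_0$ vertices is $n + 2^{b-1} \leq 2^b - 1$.

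The verification of irreducibility then reduces to three quick observations. First, since no edges are ever added inside $V(G)$, the graph $G$ is an induced subgraph of $H$. Second, the universal vertex $c_0$ forces $\mathrm{diam}(H) \leq 2$; hence for every non-$W$ vertex $v$ and every $w \in W$ we have $d_H(v,w) = 1$ if $vw \in E(H)$ and $d_H(v,w) = 2$ otherwise. Thus distinct $W$-neighbourhoods among non-$W$ vertices yield distinct $W$-distance vectors, and together with the trivial fact that any vertex $w \in W$ resolves the pair $\{w,v\}$ whenever $v \neq w$, it follows that $W$ is a resolving set for $H$, so $\beta(H) \leq b$. Third, since $H$ contains $K_{2^{b-1}+1}$ as a subgraph and this is preserved under edge addition, Lemma~\ref{kn} gives $\beta(H') \geq \lceil \log_2(2^{b-1}+1) \rceil = b$ for every spanning supergraph $H'$ of $H$, whence $\tau(H) \geq b$. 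Combining these bounds yields $\beta(H) = \tau(H) = b$, so $H$ is irreducible.

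The chief subtlety is that the induced-subgraph constraint forbids altering any edge within $V(G)$, so all of the resolving power and all of the dimension-forcing structure must come from $W$, $C$, and the edges joining them to $V(G)$. Choosing $b$ with $2^{b-1} \geq n+1$ is precisely what simultaneously makes $|C|$ large enough to push $\tau(H)$ up to $b$ via Lemma~\ref{kn} and leaves enough distinct proper subsets of $W$ available to serve as the $W$-neighbourhoods of the $n + 2^{b-1}$ non-$W$, non-$c_0$ vertices. Once $b$ has been chosen correctly, everything else is a bookkeeping check.
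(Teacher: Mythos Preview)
Your argument is correct. Both your construction and the paper's build a diameter-$2$ graph $H$ containing $G$ as an induced subgraph, with a designated set $W$ engineered (by assigning distinct $W$-neighbourhoods to the non-$W$ vertices) to resolve $H$, so that $\beta(H)\le |W|$. The genuine difference lies in how the matching lower bound $\tau(H)\ge |W|$ is forced. The paper pads $G$ to $G'=G\vee K_p$ so that the total order of $H$ is exactly $2^k+k$, then invokes Lemma~\ref{weakbound} (the diameter-$2$ bound in terms of order) to conclude that every spanning supergraph of $H$ has metric dimension at least $k$. You instead plant an explicit clique $C$ of size $2^{b-1}+1$ and invoke Lemma~\ref{kn}, which immediately gives $\beta(H')\ge\lceil\log_2(2^{b-1}+1)\rceil=b$ for every spanning supergraph $H'$. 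Your route is arguably cleaner: Lemma~\ref{kn} applies directly to any supergraph without reference to diameter, so you avoid the (admittedly trivial) side case in which added edges collapse the diameter to $1$. On the other hand, the paper's version dovetails more naturally with the Shortlex Assignment Algorithm machinery already developed, and produces a slightly smaller host graph. Both approaches are equally elementary.
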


\begin{proof}
Let $G$ be a graph of order $n$. Let $p$ be the smallest positive integer such that $p+n=2^k$ for some positive integer $k$. Let $G'=G \vee K_p$, and let $U=K_k$. Apply the Shortlex Assignment Algorithm to $G'\cup U$ with $W=V(U)$ and $P=V(G')$, ordering the vertices of $G'$ so that a universal vertex of $G'$ appears last. Let $\mathcal{E}$ be the output of the algorithm, and let $H=(G'\cup U)+\mathcal{E}$. Since the last vertex of $G'$ becomes universal in $H$, we have $\mbox{diam}(H)=2$.  By Lemma~\ref{weakbound}, the vertices of $U$ form a basis for $H$, and $H$ is irreducible.
\end{proof}

We conclude with the following question.

\begin{question}
	Is the problem of determining whether a graph is irreducible NP-hard?
\end{question}

\end{document}